\documentclass[12pt]{article}
\usepackage{amssymb}
\usepackage{amsthm}
\usepackage{amsthm,amsmath,indentfirst,amssymb}
\usepackage{pstricks}
\usepackage{algorithm} 
\usepackage{algorithmic} 
\usepackage{blindtext}
\usepackage{float}
\usepackage{graphicx}
\usepackage{cite}
\makeatletter

\newcommand{\Rmnum}[1]{\expandafter\@slowromancap\romannumeral #1@}
\makeatother
\topmargin -5mm \pagestyle{plain} \headheight 5mm \oddsidemargin
5mm \textwidth 160mm \textheight 230mm

\setlength{\parindent}{1em}
\parskip -0.1cm

\newtheorem{theorem}{Theorem}[section]
\newtheorem{lemma}[theorem]{Lemma}
\newtheorem{corollary}[theorem]{Corollary}
\newtheorem{conjecture}[theorem]{Conjecture}

\begin{document}
\title{Feedback vertex number of Sierpi\'{n}ski-type graphs\thanks{Research supported by NSFC (No. 11571294)}}
\author{Lili Yuan, Baoyindureng Wu\footnote{Corresponding author.
Email: baoywu@163.com (B. Wu) }, Biao Zhao\\
\small  College of Mathematics and System Sciences, \\
\small Xinjiang University, Urumqi, 830046, Xinjiang, China\\}

\date{}
\maketitle \baselineskip 18pt
\textbf{Abstract}: The feedback vertex number $\tau(G)$ of a graph $G$ is the minimum number of vertices that can be deleted from $G$ such that the resultant graph does not contain a cycle. We show that $\tau(S_p^n)=p^{n-1}(p-2)$ for the Sierpi\'{n}ski graph $S_p^n$ with $p\geq 2$ and $n\geq 1$. The generalized Sierpi\'{n}ski triangle graph $\hat{S_p^n}$ is obtained by contracting all non-clique edges from the Sierpi\'{n}ski graph $S_p^{n+1}$. We prove that $\tau(\hat{S}_3^n)=\frac {3^n+1} 2=\frac{|V(\hat{S}_3^n)|} 3$, and give an upper bound for $\tau(\hat{S}_p^n)$ for the case when $p\geq 4$. \\

\noindent{\bfseries Keywords}: Sierpi\'{n}ski triangle; Sierpi\'{n}ski graphs;
Sierpi\'{n}ski-like graphs; Feedback vertex number

\section{Introduction}

In this paper, we consider simple, finite, undirected graphs only, and refer to \cite{Bondy}
for undefined terminology and notation. For a graph $G=(V(G),E(G))$, the {\it order} and {\it size} are $|V(G)|$ and $|E(G)|$, respectively. We denote the order and the size of $G$ simply by $|G|$ and $||G||$, respectively. For a vertex $v$, the {\it degree}
of a vertex $v$, denoted by $d_G(v)$, is the number of edges which
are incident with $v$ in $G$, and the neighborhood of $v$, denoted by $N_G(v)$, is the set of the vertices adjacent to $v$ in $G$.

For a set $X\subseteq V,$ let $G-X$ be the graph obtained from $G$ by removing vertices of $X$ and all the edges that are incident to a vertex of $X$. The subgraph $G-(V(G)\setminus X)$ is said to be the {\it induced subgraph} of $G$ induced by $X$, and is denoted by $G[X]$.
We call $X$ is a {\it feedback vertex set} of $G$ if $G-X$ is a forest. The {\it feedback vertex number} of $G$, denoted by $\tau(G)$, is the cardinality of a  minimum feedback vertex set of $G$. The order of a maximum induced forest of $G$ is denoted by $f(G)$. It is clear that $\tau(G)+f(G)=|G|$.

So, determining the feedback number of a graph $G$ is equivalent to finding the maximum induced forest of $G$, proposed first by Erd\H{o}s, Saks and S\'{o}s \cite{Erdos}. Some results on the maximum induced forest are obtained for several families of graphs, such as planar with high girth \cite{Dross, Kelly}, outerplanar graphs \cite{Hosono, Pel}, triangle-free planar graphs \cite{Sala}.  The problem of determining the feedback vertex number has been proved to be NP-complete for general graphs \cite{Karp}. However, the problem has been studied for some special graphs, such as hypercubes \cite{Focardi, Pike}, toroids \cite{Made}, de Bruijn graphs \cite{Xu11}, de Bruijn digraphs \cite{Xu10}, 2-degenerate graphs \cite{Boro}.
A review of results and open problems on the feedback vertex number was given by Bau and Beineke \cite{Bau}.

Graphs whose drawings can be viewed as approximations to the famous Sierpi\'{n}ski triangle have been studied extensively in the past 25 years, see a recent survey \cite{Hinz17} for the collection of the results and the related works. We follow the notation there.
For an integer $k$, let $\mathbb{N}_k$ be the set of all integer not less than $k$. In particular, $\mathbb{N}_1$ is denoted simply by $\mathbb{N}$.  For an integer $p\in  \mathbb{N}$, let $P=\{0, 1,\ldots, p-1\}$. For convenience, let $[n]=\{1,\ldots, n\}$ for an integer $n\in \mathbb{N}$. Let $P^{n}$ be the set of all $n$-tuples on $P$.

\vspace{3mm} \noindent{\bf Definition 1.} (\cite{Hinz17}) For two integers $p\geq 1$ and $n\geq 0$, the Sierpi\'{n}ski  graph $S_p^n$ is given by
\begin{equation*}V(S_p^n)=P^n,  E(S_p^n)=\{\{\underline{s}ij^{d-1}, \underline{s}ji^{d-1}\}|\ i,j\in P, i\neq j;d\in [n];\underline{s}\in P^{n-d}\}.
\end{equation*}

A vertex $s_1s_2\ldots s_n$ of $S_p^n$ is called an extreme vertex if $s_1=\cdots=s_n$. There are precisely $p$ extreme vertices in $S_p^n$. It is easy to see that $d_{S_p^n}(u)=p-1$ for an extreme vertex $u$ and $d_{S_p^n}(v)=p$ for all other vertices $v$.
In the trivial cases $n=0$ or $p=1$, there is only one vertex and no edge, i.e. $S_p^0\cong K_1\cong S_1^n$; moreover, $S_p^1\cong K_p$, where $K_p$ denotes the complete graph of order $p$. The first interesting case is $p=2$, where $S_2^n\cong P_{2^n}$.
The drawings of the Sierpi\'{n}ski graphs $S_3^3$ and $S_5^2$ are shown in Fig. 1.

\begin{figure}[!h]
\begin{center}
\scalebox {0.6}[0.6]{\includegraphics {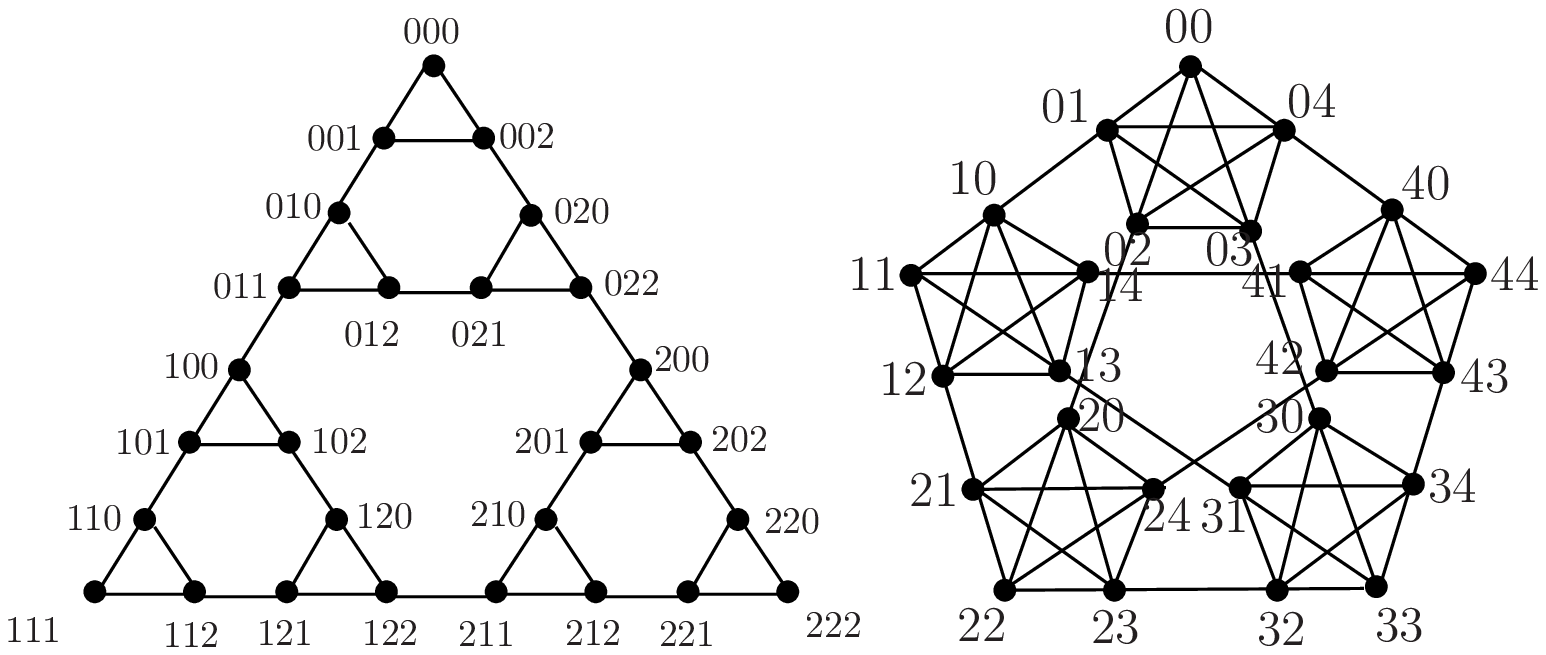}}
\centerline{Fig. 1\hspace{3mm} $S_3^3$ and $S_5^2$}
\end{center}
\end{figure}

\noindent{\bf Definition 2.} (\cite{Hinz17}) For two integers $p\geq 1$ and $n\geq 0$, the generalized Sierpi\'{n}ski triangle graph $\hat{S_p^n}$ is obtained by contracting all non-clique edges (with the form $\{\underline{s}ij^l, \underline{s}ji^l\}$ for distinct $i, j\in P$ and $l\in [n]$, $\underline{s}\in P^{n-l}$) from the Sierpi\'{n}ski graph $S_p^{n+1}$,

\vspace{3mm} Sierpi\'{n}ski-type graphs have many interesting properties, see \cite{Hinz02} for its plnarity, \cite{Klav05} for crossing number,  \cite{Hinz12, Lin, Klav08, Xue12} for its various coloring, \cite{Hinz90, Xue14} for various distance, \cite{Lin13} for the global strong defensive alliances, \cite{Lin11} for the hub number, \cite{Teufl10,Teufl11} for enumeration of matchings and spanning trees.

\vspace{3mm} \vspace{3mm} The main topic of the paper focus on the feedback vertex number of Sierpi\'{n}ski-type graphs. In Section 2, we show that $\tau(S_p^n)=p^{n-1}(p-2)$. In Section 3, we show that $\tau(\hat{S}_3^n)=\frac {3^n+1} 2=\frac{|\hat{S}_3^n|} 3.$
\vspace{3mm} In Section 4, we present an upper bound for the feedback vertex number of $\hat{S}_p^n$ for any $p\geq 4$. Some other relevant results are presented as well.

\section{\large Sierpi\'{n}ski  graph }

In this section, we determine the exact value of feedback number of Sierpi\'{n}ski graph $S_p^n$. For two vertex subsets $X, Y$ of a graph $G$, we denote by $E_G[X, Y]$ the set of edges of $G$ with one end in $X$ and the other end in $Y$.
First we show the following.

\begin{lemma} For two integers $n\geq 1$ and $p\geq 2$, $\tau(S_p^n)\geq p^{n-1}(p-2)$.
\end{lemma}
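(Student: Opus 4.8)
The plan is to exhibit a large family of pairwise vertex-disjoint cliques inside $S_p^n$ and argue that any feedback vertex set must sacrifice nearly all the vertices of each such clique. Taking $d=1$ in Definition 1, for every fixed tuple $\underline{s}\in P^{n-1}$ the edges $\{\underline{s}i,\underline{s}j\}$ with $i,j\in P$, $i\neq j$ are all present, so the $p$ vertices $C_{\underline{s}}=\{\underline{s}i: i\in P\}$ induce a complete graph $K_p$ in $S_p^n$. As $\underline{s}$ ranges over $P^{n-1}$ these vertex sets are pairwise disjoint and their union is all of $P^n=V(S_p^n)$; thus $\{C_{\underline{s}}\}_{\underline{s}\in P^{n-1}}$ is a partition of $V(S_p^n)$ into $p^{n-1}$ cliques, each of order $p$.

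First I would record the elementary fact that a vertex subset of $K_p$ is acyclic exactly when it has at most two vertices, since any three vertices of $K_p$ span a triangle. Now let $X$ be any feedback vertex set of $S_p^n$. For each $\underline{s}$, the graph $(S_p^n-X)[C_{\underline{s}}\setminus X]$ is an induced subgraph of the forest $S_p^n-X$, hence is itself a forest; and because every clique edge on $C_{\underline{s}}$ survives whenever both its endpoints do, this forest is precisely $K_p$ with $|X\cap C_{\underline{s}}|$ of its vertices deleted. Acyclicity therefore forces $|X\cap C_{\underline{s}}|\geq p-2$ for every $\underline{s}\in P^{n-1}$.

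Finally I would sum over the partition: since the sets $C_{\underline{s}}$ are pairwise disjoint,
$$|X|\ \geq\ \sum_{\underline{s}\in P^{n-1}}|X\cap C_{\underline{s}}|\ \geq\ p^{n-1}(p-2),$$
which yields $\tau(S_p^n)\geq p^{n-1}(p-2)$. There is no serious obstacle here beyond correctly locating the bottom-level ($d=1$) cliques in Definition 1 and checking that they are disjoint and spanning; the bound is then just a disjoint-clique packing argument. As sanity checks, $p=2$ gives the vacuous bound $\tau\geq 0$, consistent with $S_2^n\cong P_{2^n}$, while $n=1$ recovers $\tau(K_p)=p-2$.
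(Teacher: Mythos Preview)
Your proof is correct. The paper, however, argues by induction on $n$: it partitions $V(S_p^n)$ into the $p$ top-level blocks $V_i=\{i\underline{k}:\underline{k}\in V(S_p^{n-1})\}$, each inducing a copy of $S_p^{n-1}$, and applies the induction hypothesis to each block to get $|X\cap V_i|\geq p^{n-2}(p-2)$, then sums. You instead go straight to the bottom of the recursion and partition $V(S_p^n)$ into the $p^{n-1}$ leaf cliques $C_{\underline{s}}\cong K_p$, using only $\tau(K_p)=p-2$. The two arguments are really the same counting in different guises---unwinding the paper's induction lands exactly on your clique packing---but your version is a little more direct and avoids the inductive machinery altogether. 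One cosmetic remark: since the $C_{\underline{s}}$ actually partition $V(S_p^n)$, the first inequality in your displayed chain is an equality.
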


\begin{proof} By induction on $n$. If $n=1$, $S_p^n\cong K_p$. Since $$\tau(S_p^n)=\tau(K_p)=p-2= p^0(p-2)=p^{n-1}(p-2),$$ the result then follows.
Next assume that $n\geq 2$ and $X$ is a minimum feedback vertex set of $S_p^n$.
Let $V_i=\{i\underline{k}|\ \underline{k}\in V(S_p^{n-1})\}$ for each $i\in P$. From the definition of $S_p^n$, we know that  $S_p^n[V_i]\cong S_p^{n-1}$. Let $X_i=X\cap V_i$ for each $i\in P$.
Since $X$ is a minimum feedback vertex set of $S_p^n$, $X_i$ is a feedback vertex set of $S_p^n[V_i]$. By induction hypothesis, $|X_i|\geq p^{n-2}(p-2)$. Thus, $|X|\geq pp^{n-2}(p-2)=p^{n-1}(p-2)$.
\end{proof}

A 2-element set $Y\subseteq P^{m-1}$ for an integer $m\in \{2, \ldots, n\}$ is said to be {\it pairable} if $Y=\{\underline{s}a, \underline{s}b\}$, where $\underline{s}\in P^{m-2}$ and $\{a,b\}\in \big(^P_2\big)$. In this case, let $head(Y)=\underline{s}$. Further, we define $$C(Y):=C_1(Y)\cup C_2(Y),$$ where $C_1(Y)=\{\underline{s}aa, \underline{s}ab, \underline{s}ba, \underline{s}bb\}$ and $C_2(Y)=\{\underline{s}k(k-1), \underline{s}k(k+1)|\ k\in P\setminus\{a,b\}\},$ and $k-1, k+1$ are taken modulo $p$.
In general, a set $Y\subseteq P^{m-1}$ is said to be {\it pairable} if $Y$ can be partitioned into some 2-element sets $Y_1, \ldots, Y_q$ such that $Y_i$ is pairable for each $i$, and $head(Y_i)\neq head(Y_j)$ for any $i, j$ with $i\neq j$.
Moreover, let us define $$head(Y):=\bigcup_{i\in [q]} head(Y_{i})\ \text{and } C(Y):=\bigcup_{i\in [q]} C(Y_{i}).$$

Observe that if $Y$ is pairable (since $head(Y_i)\neq head(Y_j)$ for any $i, j$ with $i\neq j$), then it must be partitioned into some 2-element pairable sets in a unique way. So, we call $\{Y_1, \ldots, Y_q\}$ the pairable partition of $Y$, and call each $Y_i$ a block of $Y$. Take $Y=\{0, 2\}\subseteq V(S^1_5)$ for an example. Then $C(Y)=\{00,02,20,22,10,12,32,34,43,40\} \subseteq V(S^2_5),$
see the subgraphs $S^1_5[Y]$ and $S^2_5[C(Y)]$ as depicted in red in Fig 2.

\begin{figure}[!h]
\begin{center}
\scalebox {0.7}[0.7]{\includegraphics {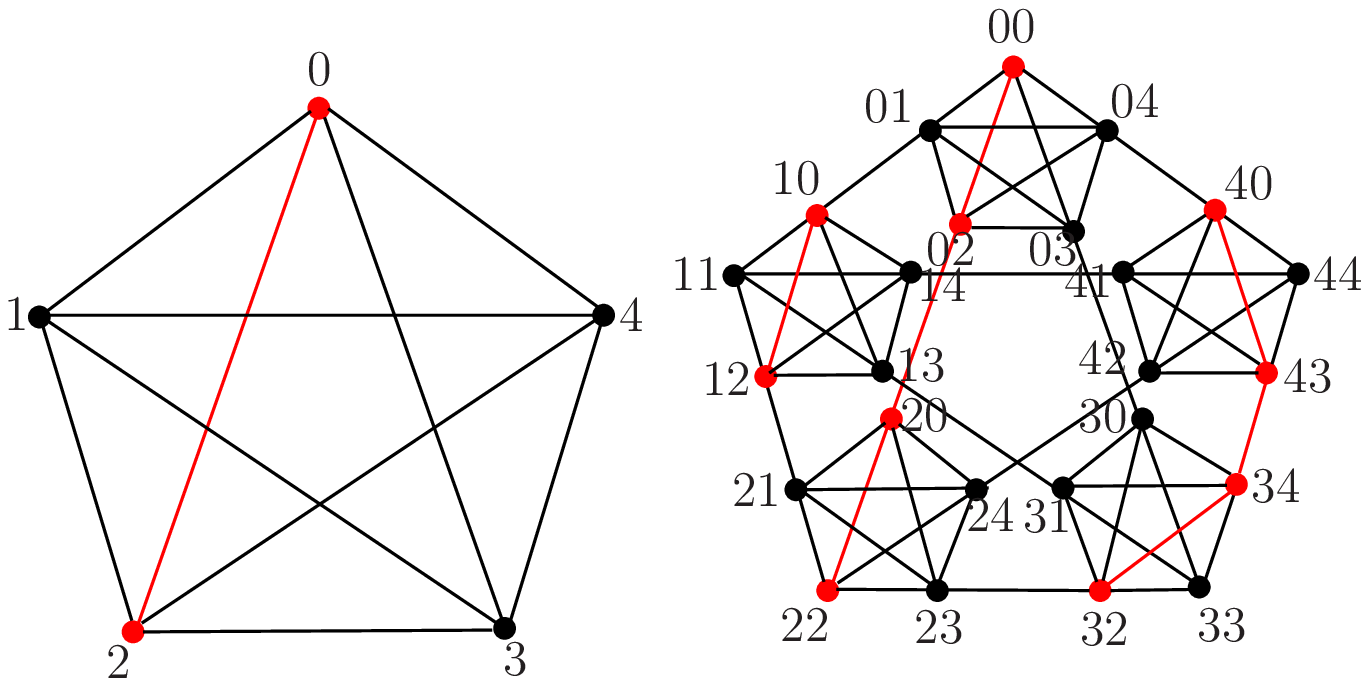}}
\centerline{Fig. 2. $S^1_5[Y]$ and $S^2_5[C(Y)]$, where $Y=\{0, 2\}$ }
\end{center}
\end{figure}

\begin{lemma} If $Y\subseteq P^{m-1}$ is pairable for an integer $m\geq 2$, then
$C(Y)$ is pairable, and $head(C(Y))=\{\underline{s}a|\ \underline{s}\in head(Y), a\in P\}$.
\end{lemma}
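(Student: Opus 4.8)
The plan is to reduce the statement to the case of a single two-element pairable block and then assemble the general case from a disjoint union of blocks. Since a pairable set $Y$ has a \emph{unique} pairable partition $\{Y_1,\dots,Y_q\}$ (as noted just before the lemma), and since $C(Y)=\bigcup_{i}C(Y_i)$ and $head(Y)=\bigcup_i head(Y_i)$ by definition, it suffices to (i) prove the claim when $Y$ is itself a two-element block, and (ii) check that the block structures coming from the various $C(Y_i)$ fit together into a single valid pairable partition with pairwise distinct heads.

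For step (i), take $Y=\{\underline{s}a,\underline{s}b\}$ with $\underline{s}\in P^{m-2}$ and $a\neq b$, so $head(Y)=\{\underline{s}\}$. I would write every element of $C(Y)\subseteq P^m$ in the form $(\underline{s}x)y$ and group by the coordinate $x\in P$. The definition then yields exactly $p$ groups: the block $\{\underline{s}aa,\underline{s}ab\}$ with head $\underline{s}a$, the block $\{\underline{s}ba,\underline{s}bb\}$ with head $\underline{s}b$, and for each $k\in P\setminus\{a,b\}$ the block $\{\underline{s}k(k-1),\underline{s}k(k+1)\}$ with head $\underline{s}k$. Each is a genuine two-element pairable set; the only point to check is that $k-1\neq k+1$ modulo $p$, which holds for $p\geq 3$, while for $p=2$ the index set $P\setminus\{a,b\}$ is empty so $C_2(Y)$ contributes nothing. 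Since the heads $\underline{s}a,\underline{s}b,\underline{s}k$ are pairwise distinct (as $a,b,k$ are), $C(Y)$ is pairable with $head(C(Y))=\{\underline{s}x : x\in P\}=\{\underline{s}'a : \underline{s}'\in head(Y),\, a\in P\}$, establishing the single-block case.

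For step (ii), I would use the hypothesis that the heads $head(Y_1),\dots,head(Y_q)\in P^{m-2}$ are pairwise distinct. The concatenation map $P^{m-2}\times P\to P^{m-1}$, $(\underline{t},x)\mapsto \underline{t}x$, is injective, so the heads $head(Y_i)\,x$ produced across all blocks of all $C(Y_i)$, as $i$ ranges over $[q]$ and $x$ over $P$, are pairwise distinct. Two two-element blocks with distinct heads have disjoint vertex sets, since elements of $P^m$ with different length-$(m-1)$ prefixes cannot coincide. Hence the collection of all these blocks is a partition of $C(Y)=\bigcup_i C(Y_i)$ into two-element pairable sets with pairwise distinct heads, so $C(Y)$ is pairable. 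Taking the union of the per-block head computations from step (i) gives $head(C(Y))=\{\underline{s}x : \underline{s}\in head(Y),\, x\in P\}$, as claimed.

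I expect the only real subtlety to be careful bookkeeping rather than genuine difficulty: one must make sure the $p$ blocks extracted from a single $C(Y_i)$ really are two-element sets (handling both the $p=2$ degeneracy and the modular $k\pm 1$ collision), and, in the multi-block step, that distinctness of the original heads is exactly what simultaneously guarantees disjointness of the new blocks and distinctness of the new heads via injectivity of concatenation. No induction or global argument about $S_p^n$ seems necessary; the statement is essentially a structural identity describing how the operator $C(\cdot)$ acts on heads.
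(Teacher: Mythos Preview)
Your proposal is correct and follows essentially the same approach as the paper: both arguments partition each $C(Y_i)$ into the two blocks $\{\underline{s_i}a_ia_i,\underline{s_i}a_ib_i\}$, $\{\underline{s_i}b_ia_i,\underline{s_i}b_ib_i\}$ from $C_1(Y_i)$ together with the $p-2$ blocks $\{\underline{s_i}k(k-1),\underline{s_i}k(k+1)\}$ from $C_2(Y_i)$, observe that their heads are exactly $\{\underline{s_i}x:x\in P\}$, and then use $\underline{s_i}\neq\underline{s_j}$ to conclude that heads remain pairwise distinct across different $i$. Your version is in fact a bit more careful than the paper's, since you explicitly verify that each putative block really has two elements (the $k-1\neq k+1$ check for $p\geq 3$ and the vacuous $C_2$ for $p=2$) and that blocks with distinct heads are disjoint, points the paper leaves implicit.
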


\begin{proof}
Let $\{Y_1, \ldots, Y_q\}$ be the pairable partition of $Y$, where for each $i$, $Y_i=\{\underline{s_i}a_i, \underline{s_i}b_i\}$ for some $\underline{s_i}\in P^{m-2}$ and $\{a_i,b_i\}\in \big(^P_2\big)$.
By our definition, $$C(Y):=\bigcup_{i\in [q]} C(Y_{i}),$$ where
$C(Y_i):=C_1(Y_i)\cup C_2(Y_i),$ and $C_1(Y_i)=\{\underline{s_i}a_ia_i, \underline{s_i}a_ib_i, \underline{s_i}b_ia_i, \underline{s_i}b_ib_i\}$ and $C_2(Y_i)=\{\underline{s_i}k(k-1), \underline{s_i}k(k+1)|\ k\in P\setminus\{a_i,b_i\}$, where $k-1, k+1$ \text{are taken modulo $p$}$\}$.

Note that for an integer $i\in [q]$, $C_1(Y_i)$ can be partitioned into two 2-element pairable sets $\{\underline{s_i}a_ia_i, \underline{s_i}a_ib_i\}$ and $\{\underline{s_i}b_ia_i, \underline{s_i}b_ib_i\}$, and $C_2(Y_i)$ can be partitioned into $p-2$ 2-element pairable sets $\{\underline{s_i}k(k-1), \underline{s_i}k(k+1)\}$, where $k\in P\setminus\{a_i,b_i\}$. It is clear that the set of the heads of these $p$ sets is $\{\underline{s_i}a|\ a\in P\}$. Moreover, by our assumption, since $\underline{s_i}\neq \underline{s_j}$ for distinct integers $i, j$, $\underline{s_i}a\neq \underline{s_j}b$ for any $a, b \in P$ (even when $a=b$). This proves the lemma.
\end{proof}

In what follows, for an integer $m\geq 2$ and a pairable set $Y\subseteq P$, let $C^{m}(Y)=C(C^{m-1}(Y))$.

%

\begin{lemma} Let $Y_1=\{1, 2\}\subseteq V(S_p^1)$, and $Y_m=C^{m-1}(Y_1)$ for $m\in [n]\setminus \{1\}$. Then $Y_m$ is a pairable set with $head(Y_m)=P^{m-1}$, $|Y_m|=2p^{m-1}$ and $S_p^m[Y_m]$ is a forest for each $m\in [n]$.
\end{lemma}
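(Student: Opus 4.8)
The plan is to prove all four assertions simultaneously by induction on $m$. The base case $m=1$ is immediate: $Y_1=\{1,2\}$ is a single pairable block with empty head, so $head(Y_1)=P^0=P^{1-1}$ and $|Y_1|=2=2p^0$, while $S_p^1\cong K_p$ makes $S_p^1[Y_1]$ a single edge, hence a forest. For the inductive step I assume the statement for $m$ and prove it for $m+1$, where $Y_{m+1}=C(Y_m)$. Lemma 2.2 already delivers three of the four conclusions almost for free: it gives that $Y_{m+1}=C(Y_m)$ is pairable with $head(Y_{m+1})=\{\underline{s}a\mid \underline{s}\in head(Y_m),\,a\in P\}=P^m$, and counting blocks (each block of $Y_m$ spawns exactly $p$ blocks of $C(Y_m)$, two from $C_1$ and $p-2$ from $C_2$) yields $|Y_{m+1}|=p\,|Y_m|=2p^m$. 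So the real content of the step is the forest property of $S_p^{m+1}[Y_{m+1}]$.

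To analyse $H:=S_p^{m+1}[Y_{m+1}]$ I would split its edges according to the parameter $d$ in the edge definition of $S_p^{m+1}$. Fixing a length-$(m-1)$ prefix $\underline{s}$ and letting the last two coordinates vary realises a copy of $S_p^2$ inside $S_p^{m+1}$; the vertices of $Y_{m+1}$ in this copy are exactly $C(Y_{\underline{s}})$ for the unique block $Y_{\underline{s}}=\{\underline{s}a_s,\underline{s}b_s\}$ of $Y_m$ with head $\underline{s}$ (this uses $head(Y_m)=P^{m-1}$, so the blocks are indexed by prefixes). First I would settle the within-copy picture ($d\in\{1,2\}$): the four vertices of $C_1$ form the path $\underline{s}a_sa_s-\underline{s}a_sb_s-\underline{s}b_sa_s-\underline{s}b_sb_s$, while the $C_2$ vertices $\underline{s}k(k-1),\underline{s}k(k+1)$ for $k\notin\{a_s,b_s\}$ string together, along the cyclic order of $P$, into one or two paths, the cyclic structure being broken precisely because $a_s,b_s$ are deleted. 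A short case check shows $C_1$ and $C_2$ are never joined (a swap of a $C_2$ vertex has a coordinate outside $\{a_s,b_s\}$, so cannot land in $C_1$), so every copy contributes only disjoint paths.

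The crux is the interaction across copies, i.e.\ the edges with $d\ge 3$. Such an edge has the form $\{\underline{t}ij^{d-1},\underline{t}ji^{d-1}\}$; its two endpoints have last two coordinates $jj$ and $ii$, hence lie in $Y_{m+1}$ only if they equal $\underline{s}a_sa_s$ or $\underline{s}b_sb_s$ in their respective copies. Thus the cross-copy edges attach to $H$ only at the two endpoints of the $C_1$-paths, and never touch $C_2$ or the two middle $C_1$ vertices. I would then set up the bijection $\underline{s}jj\mapsto \underline{s}j$ between these $C_1$-endpoints and $Y_m$, and verify by tracking indices that a cross-copy edge $\{\underline{s}jj,\underline{s}'ii\}$ lies in $H$ exactly when $\{\underline{s}j,\underline{s}'i\}$ is an edge of $S_p^m[Y_m]$ with parameter $d-1\ge 2$. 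Under this incidence-preserving correspondence, the subgraph of $H$ formed by the $C_1$-paths together with the cross-copy edges is nothing but $S_p^m[Y_m]$ with each of its $d'=1$ block edges subdivided twice; since subdivision preserves acyclicity and $S_p^m[Y_m]$ is a forest by the induction hypothesis, this subgraph is a forest. As the $C_2$-paths are disjoint from it and from one another, $H$ is a disjoint union of forests, completing the step.

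The main obstacle is the bookkeeping in the last paragraph: correctly reading off the length-$(m-1)$ prefix and the trailing coordinates of an endpoint of a $d\ge 3$ edge, and matching the parameter $d$ in $S_p^{m+1}$ with the parameter $d-1$ in $S_p^m$, so that the cross-copy edges of $H$ are seen to be in exact bijection with the higher edges of $S_p^m[Y_m]$. Once that bijection is pinned down, the forest conclusion follows from the elementary fact that subdividing edges of a forest again yields a forest, and the argument runs uniformly in $p$ (with $C_2$ simply empty when $p=2$).
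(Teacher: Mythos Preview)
Your argument is correct and follows essentially the same approach as the paper's: both split $C(Y_m)$ into its $C_1$- and $C_2$-parts, verify there are no edges between them (the paper's Claim~1), observe that the $C_2$-parts form disjoint unions of paths (Claim~2), and deduce acyclicity of the $C_1$-part from the induction hypothesis (Claim~3). Your subdivision description of the $C_1$-subgraph together with the cross-copy edges is exactly dual to the paper's contraction argument, in which a hypothetical cycle in $S_p^m[Y_{m1}]$ is collapsed, one $P_4$ per block, to a cycle in $S_p^{m-1}[Y_{m-1}]$.
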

\begin{proof}

First of all, since $Y_1=\{1,2\}$ is a pairable set, by Lemma 2.2, $Y_2$ is pairable.
By the definition, $$Y_2=C(Y_1)=\{11, 12, 21, 22\}\cup\{ k(k-1), k(k+1)\mid k\in P\setminus\{1, 2\}\}.$$ Note that $head(Y_2)=\{0, 1, \dots, p-1\}=P$ and $|Y_2|=2p$. One can see that $S_p^2[Y_2]$ is a linear forest consisting of two paths $$11, 12, 21, 22\ \text{and}\ 32, 34, 43,45, \cdots, (p-1)(p-2), (p-1)0, 0(p-1), 01.$$

Assume that $m\geq 3$ and the result is true for smaller value of $m$.
By induction hypothesis, $Y_{m-1}$ is a pairable set
with $head(Y_{m-1})=P^{m-2}$ and $|Y_{m-1}|=2p^{m-2}$. Since $Y_m=C(Y_{m-1})$, by Lemma 2.2,
$Y^m$ is pairable, and $$head(Y_m)=\{sa|\ a\in head(Y_{m-1}), a\in P\}=P^{m-1}.$$ Thus $|Y_m|=2 head(Y_m)=2|P^{m-1}|=2p^{m-1}$.

\vspace{3mm} Next we show that $S_p^m[Y_m]$ is a forest of $S_p^m$. Let $Y_{m}=Y_{m1}\cup Y_{m2}$, where $Y_{m1}=C_1(Y_{m-1}), Y_{m2}=C_2(Y_{m-1})$.

\vspace{3mm}\noindent{\bf Claim 1.} $E_{S_p^m}[Y_{m1}, Y_{m2}]=\emptyset.$

\begin{proof} Suppose not, and let $y_1\in Y_{m1}$ and $y_2\in Y_{m2}$ with $y_1y_2\in E(S_p^m)$.
By the definition of $Y_{m1}$ and $Y_{m2}$, there exist blocks $\{\underline{s_1}a, \underline{s_1}b\}$ and $\{\underline{s_2}c, \underline{s_2}d\}$ of $Y_{m-1}$ such that $y_1\in C_1(\{\underline{s_1}a, \underline{s_1}b\})$ and $y_2\in C_2(\{\underline{s_2}c, \underline{s_2}d\})$, where $\underline{s_1}, \underline{s_2}\in P^{m-2}$ and $a, b, c, d \in P$. Recall that
$C_1(\{\underline{s_1}a, \underline{s_1}b\})=\{\underline{s_1}aa, \underline{s_1}ab, \underline{s_1}ba, \underline{s_1}bb\}$ and $C_2(\{\underline{s_2}c, \underline{s_2}d\})=\{\underline{s_2}k(k-1), \underline{s_2}k(k+1)|\ k\in P\setminus \{c, d\}\}$.

\vspace{3mm}\noindent{\bf Case 1.} $y_2=\underline{s_2}k(k-1)$ for an element $k\in P\setminus \{c, d\}$.

\vspace{1mm} Since $N_{S_p^m}(y_2)=\{\underline{s_2}(k-1)k\}\cup \{\underline{s_2}kc|\ c\in P\setminus \{k-1\}\}$,
$y_1\in \{\underline{s_1}aa, \underline{s_1}ab, \underline{s_1}ba, \underline{s_1}bb\}$, and $y_1y_2\in E(S_p^m)$, we have $(\{\underline{s_2}(k-1)k\}\cup \{\underline{s_2}kc|\ c\in P\setminus \{k-1\}\})\cap \{\underline{s_1}aa, \underline{s_1}ab, \underline{s_1}ba, \underline{s_1}bb\}\neq \emptyset$. It follows that $\underline{s_1}=\underline{s_2}$. Moreover, since  $Y_{m-1}$ is pairable, the heads of different blocks are distinct. So,  $\{\underline{s_1}a, \underline{s_1}b\}=\{\underline{s_2}c, \underline{s_2}d\}$ and thus $\{a, b\}=\{c, d\}$.

Now rewrite $N_{S_p^m}(y_2)=\{\underline{s_1}(k-1)k\}\cup \{\underline{s_1}kc|\ c\in P\setminus \{k-1\}\}$, where $k\in P\setminus \{a, b\}$. However, since $k\in P\setminus \{a, b\}$, it is clear that $N_{S_p^m}(y_2)\cap \{\underline{s_1}aa, \underline{s_1}ab, \underline{s_1}ba, \underline{s_1}bb\}=\emptyset$, a contradiction, implying $y_1y_2\notin E(S_p^m)$.

\vspace{3mm}\noindent{\bf Case 2.} $y_2=\underline{s_2}k(k+1)$ for an element $k\in P\setminus \{c, d\}$.

\vspace{1mm} Proof of this case is completely similar to that of Case 1.

\end{proof}

\vspace{3mm}\noindent{\bf Claim 2.} $S_p^m[Y_{m2}]$ is a forest.

\begin{proof} Recall that $Y_{m2}=C_2(Y_{m-1})$ and $C_2(\{\underline{s_i}a_i,\underline{s_i}b_i\})=\{\underline{s_i}k(k+1),\underline{s_i}k(k-1)\mid k\in P\setminus \{a_i, b_i\}\}$ for a block $\{\underline{s_i}a_i, \underline{s_i}b_i\}$.
One can easily check that $S_p^m[C_2(\{\underline{s_i}a_i, \underline{s_i}b_i\})]$ is a linear forest with at most
two components. If $\{\underline{s_j}a_j, \underline{s_j}b_j\}$ is another block of $Y_{m-1}$, then by $\underline{s_i}, \underline{s_j}\in P^{m-2}$ with $\underline{s_i}\neq  \underline{s_j}$,  $E_{S_p^m}[\{\underline{s_i}k(k-1),\underline{s_i}k(k+1)\}, \{\underline{s_j}l(l-1),\underline{s_j}l(l+1)\}]=\emptyset$ for any $k\in P\setminus \{a, b\}$ and $l\in P\setminus \{c, d\}$. So,
$E_{S_p^m}[C_2(s_ia_i,s_ib_i), C_2(\underline{s_j}a_j,\underline{s_j}b_j) ]=\emptyset$. Thus $S_p^m[Y_{m2}]$ is a forest.
\end{proof}

\vspace{3mm}\noindent{\bf Claim 3.} $S_p^m[Y_{m1}]$ is a forest.

\vspace{3mm} Suppose that $C$ is an induced cycle of $S_p^m[Y_{m1}]$, and let $S=\{\underline{s}|\ \underline{s}\in P^{m-2}$ and there is a vertex $v\in V(C)$ such that $v\in C_1(\{\underline{s}a, \underline{s}b\})\}$.  Since $C_1(\{\underline{s}a, \underline{s}b\})=\{\underline{s}aa, \underline{s}ab, \underline{s}ba, \underline{s}bb\}$ for a block $\{\underline{s}a,\underline{s}b\}$  of $Y_{m-1}$, it is clear that $S_p^m[C_1(\{\underline{s}a, \underline{s}b\})]\cong P_4$.
It is not hard to see that $V(C)= \bigcup_{\underline{s}\in S} C_1(\{\underline{s}a, \underline{s}b\})$.
Let $C'=S_p^{m-1}[\cup_{\underline{s}\in S}\{\underline{s}a, \underline{s}b\} ]$. One can see that $C'$ is a cycle of $S_p^{m-1}[Y_{{m-1}}]$, a contradiction.
This proves that $S_p^{m}[Y_{m1}]$ contains no cycle.

\vspace{3mm} Summing up Claims 1-3, we conclude that $S_p^{m}[Y_m]$ is a forest.
\end{proof}

\begin{theorem} For any two integer $p\geq 2$, $n\geq 1$, $\tau(S_p^n)=p^{n-1}(p-2).$
\end{theorem}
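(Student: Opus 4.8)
The plan is to close the gap between the two bounds: Lemma 2.1 already gives $\tau(S_p^n)\geq p^{n-1}(p-2)$, so it remains only to establish the reverse inequality $\tau(S_p^n)\leq p^{n-1}(p-2)$. Recall from the introduction that $\tau(G)+f(G)=|G|$, where $f(G)$ is the order of a maximum induced forest. Since $|S_p^n|=p^n$, exhibiting a single induced forest of $S_p^n$ on $2p^{n-1}$ vertices immediately yields $f(S_p^n)\geq 2p^{n-1}$ and hence $\tau(S_p^n)\leq p^n-2p^{n-1}=p^{n-1}(p-2)$, which is exactly what is needed.

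Such a forest is supplied verbatim by Lemma 2.3. First I would set $Y_1=\{1,2\}\subseteq V(S_p^1)$ and $Y_n=C^{n-1}(Y_1)$. Lemma 2.3 guarantees that $|Y_n|=2p^{n-1}$ and that $S_p^n[Y_n]$ is a forest. I would then take $X=V(S_p^n)\setminus Y_n$: since $S_p^n-X=S_p^n[Y_n]$ is a forest, $X$ is a feedback vertex set of $S_p^n$, and $|X|=p^n-|Y_n|=p^n-2p^{n-1}=p^{n-1}(p-2)$. Therefore $\tau(S_p^n)\leq p^{n-1}(p-2)$, and combining this with Lemma 2.1 gives the claimed equality $\tau(S_p^n)=p^{n-1}(p-2)$. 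The boundary case $n=1$ is consistent: $Y_1=\{1,2\}$ induces a single edge in $S_p^1\cong K_p$, a forest of order $2=2p^{0}$, recovering $\tau(K_p)=p-2$.

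The theorem itself is thus a one-line consequence of the preceding lemmas, and all the substantive work lies upstream. The real obstacle is not in this final step but in Lemma 2.3 (together with the pairability machinery of Lemma 2.2 that feeds it): the genuine difficulty is proving that the explicitly and recursively constructed vertex set $Y_n$ induces no cycle at all, carried out through Claims 1--3, rather than merely avoiding the short cycles visible at a single level of the recursion. Once that acyclicity is in hand, optimality is automatic, because the induced forest meets the lower bound of Lemma 2.1 exactly; in particular $Y_n$ is in fact a maximum induced forest of $S_p^n$.
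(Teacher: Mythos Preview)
Your proposal is correct and follows essentially the same approach as the paper: invoke Lemma~2.3 to obtain $f(S_p^n)\geq 2p^{n-1}$ and hence $\tau(S_p^n)\leq p^{n-1}(p-2)$, then combine with the lower bound from Lemma~2.1. Your remark that all the substantive work lies in Lemma~2.3 (supported by Lemma~2.2) is also exactly right.
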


\begin{proof} By Lemma 2.3, $f(S_p^n)\geq 2p^{n-1}$, and thus $\tau(S_p^n)\leq p^n-f(S_p^n )\leq p^{n-1}(p-2).$ On the other hand, by Lemma 2.1, $\tau(S_p^n)\geq p^{n-1}(p-2).$ The result then follows.
\end{proof}

In \cite{Hinz17}, two kinds of regularization of Sierpi\'{n}ski graphs were introduced.

\vspace{3mm}\noindent {\bf Definition 3.} (\cite{Hinz17})  For $p\in N$ and $n\in N$, the graph $^+S_p^n$ is defined by
\begin{equation*}V(^+S_p^n)=P^n\cup\{w\},  E(^+S_p^n)=E(S_p^n)\cup\{\{w,i^n\}|\ i\in P\}.\end{equation*}

\begin{lemma} (\cite{Hinz17}) If $p\in N$ and $n\in N$, then $|^+S_p^n|=p^n+1$ and $\|^{+}S_p^n\|=\frac{p}{2}(p^n+1).$
\end{lemma}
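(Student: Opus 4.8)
The plan is to dispose of the order by a direct count and then compute the size by exploiting a regularity that the new apex vertex $w$ creates. For the order, note that $V(^{+}S_p^n)=P^n\cup\{w\}$ with $w\notin P^n$, and since $|P^n|=p^n$, we immediately obtain $|^{+}S_p^n|=p^n+1$.

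For the size, the key observation I would make is that $^{+}S_p^n$ is $p$-regular. Recall from the discussion following Definition 1 that $S_p^n$ has exactly $p$ extreme vertices $i^n$ (for $i\in P$), each of degree $p-1$, while every other vertex has degree $p$. In passing to $^{+}S_p^n$ we add only the $p$ edges $\{w,i^n\}$, which raises the degree of each extreme vertex from $p-1$ to $p$ and leaves all non-extreme vertices untouched at degree $p$; the apex $w$ is adjacent to exactly the $p$ extreme vertices, so its degree is $p$ as well. Hence every vertex of $^{+}S_p^n$ has degree $p$.

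With $p$-regularity established, the handshake lemma finishes the computation in one line:
\[
\|^{+}S_p^n\|=\frac{1}{2}\sum_{v}d(v)=\frac{1}{2}\,p\,(p^n+1)=\frac{p}{2}(p^n+1).
\]
Alternatively, one could first compute $\|S_p^n\|=\frac{1}{2}\big(p(p-1)+(p^n-p)p\big)=\frac{p(p^n-1)}{2}$ directly from the degree sequence of $S_p^n$, and then add the $p$ apex edges to reach $\frac{p(p^n-1)}{2}+p=\frac{p}{2}(p^n+1)$.

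There is no genuine obstacle here, as this is a routine counting lemma; the only point requiring a little care is the degree bookkeeping for the extreme vertices. One must verify that each of them gains exactly one incident edge and that $w$ contributes degree exactly $p$ (neither more nor fewer), which is guaranteed precisely because there are exactly $p$ extreme vertices, one for each $i\in P$.
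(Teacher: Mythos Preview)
Your proof is correct. The order count is immediate from the definition, and your regularity argument for the size is clean and complete: the extreme vertices go from degree $p-1$ to $p$, the non-extreme vertices stay at degree $p$, and $w$ has degree exactly $p$, so the handshake lemma gives $\|^{+}S_p^n\|=\frac{p}{2}(p^n+1)$. The alternative route via $\|S_p^n\|=\frac{p(p^n-1)}{2}$ plus $p$ apex edges is equally valid.

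As for comparison with the paper: the paper does not actually prove this lemma. It is quoted directly from the survey \cite{Hinz17} and stated without argument, so there is no in-paper proof to compare against. Your write-up supplies exactly the kind of short justification one would expect for such a cited counting fact; the $p$-regularity observation is in fact the natural explanation (and is the reason $^{+}S_p^n$ is called a regularization of $S_p^n$ in the literature).
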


\vspace{3mm}\noindent {\bf Definition 4.} (\cite{Hinz17}) For $p\in N$ and $n\in N$, the graph $^{++}S_p^n$ is defined by
\begin{equation*}V(^{++}S_p^n)=P^n\cup\{p\bar{s}|\ \bar{s}\in P^{n-1}\}, \qquad \qquad \qquad \qquad \qquad \qquad \qquad \qquad \end{equation*}
\begin{equation*} E(^{++}S_p^n)=E(S_p^n)\cup\{\{p\bar{s},p\bar{t}\}|\ \{\bar{s}, \bar{t}\}\in E(S_p^{n-1})\}\cup\{\{pi^{n-1},i^n\}|\ i\in P\}.\end{equation*}

\begin{lemma}(\cite{Hinz17})  If $p\in N$ and $n\in N,$ then $|^{++}S_p^n|=(p+1)p^{n-1}$ and $\|^{++}S_p^n\|=\frac{p+1}{2}p^n.$
\end{lemma}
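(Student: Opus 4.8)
The plan is to verify both equalities by direct counting, relying only on the vertex and edge sets specified in Definition 4 together with the degree data for $S_p^n$ recorded in Section 1. No structural insight is needed beyond keeping track of which vertices begin with the new symbol $p\notin P$.

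For the order, I would add the sizes of the two parts of $V({}^{++}S_p^n)$. The block $P^n$ supplies $p^n$ vertices, and the block $\{p\bar{s}\mid \bar{s}\in P^{n-1}\}$ supplies $p^{n-1}$ vertices; the two blocks are disjoint because every vertex of the second begins with the symbol $p\notin P$. Hence $|{}^{++}S_p^n|=p^n+p^{n-1}=(p+1)p^{n-1}$, as claimed.

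For the size, the first step is to determine $\|S_p^n\|$. Since $S_p^n$ has exactly $p$ extreme vertices, each of degree $p-1$, and $p^n-p$ remaining vertices, each of degree $p$, the handshake lemma yields $2\|S_p^n\|=p(p-1)+(p^n-p)p=p^{n+1}-p$, so $\|S_p^n\|=\tfrac{1}{2}(p^{n+1}-p)$; replacing $n$ by $n-1$ gives $\|S_p^{n-1}\|=\tfrac{1}{2}(p^n-p)$. The second step is to observe that the three edge families in Definition 4 are pairwise disjoint: those in $E(S_p^n)$ join two vertices of $P^n$, those of the form $\{p\bar{s},p\bar{t}\}$ join two vertices beginning with $p$, and the $p$ bridge edges $\{pi^{n-1},i^n\}$ each join a vertex beginning with $p$ to an extreme vertex of $P^n$. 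Since the second family is in bijection with $E(S_p^{n-1})$ via $p\bar{s}\mapsto\bar{s}$, summing the three contributions gives $\|{}^{++}S_p^n\|=\|S_p^n\|+\|S_p^{n-1}\|+p=\tfrac{1}{2}(p^{n+1}-p)+\tfrac{1}{2}(p^n-p)+p=\tfrac{1}{2}(p^{n+1}+p^n)=\tfrac{p+1}{2}p^n$.

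There is no genuine obstacle here; the only point requiring care is the disjointness check for the three edge families, since any overlap would force a correction term. Verifying that the bridge edges $\{pi^{n-1},i^n\}$ are not already counted in $E(S_p^n)$ (they are not, as one endpoint lies outside $P^n$) and that they number exactly $p$ (one per $i\in P$) is the crux of the bookkeeping, after which the arithmetic is entirely routine.
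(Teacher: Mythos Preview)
Your argument is correct. The paper does not supply its own proof of this lemma: it is quoted from the survey \cite{Hinz17} and used as a black box, so there is nothing in the present paper to compare your counting against. Your direct enumeration---splitting the vertex set into the $P^n$-block and the $p\bar{s}$-block, computing $\|S_p^n\|$ via the handshake identity and the known degree sequence, and then summing the three pairwise disjoint edge families---is exactly the natural way to obtain the formulas and matches what one finds in the cited source.
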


Drawings of $^+S^2_4$ and $^{++}S^2_4$ are shown in Fig. 3, where the left one is $^+S^2_4$ and the other one is $^{++}S^2_4$.

\begin{figure}[!h]
\begin{center}
\scalebox {0.6}[0.6]{\includegraphics {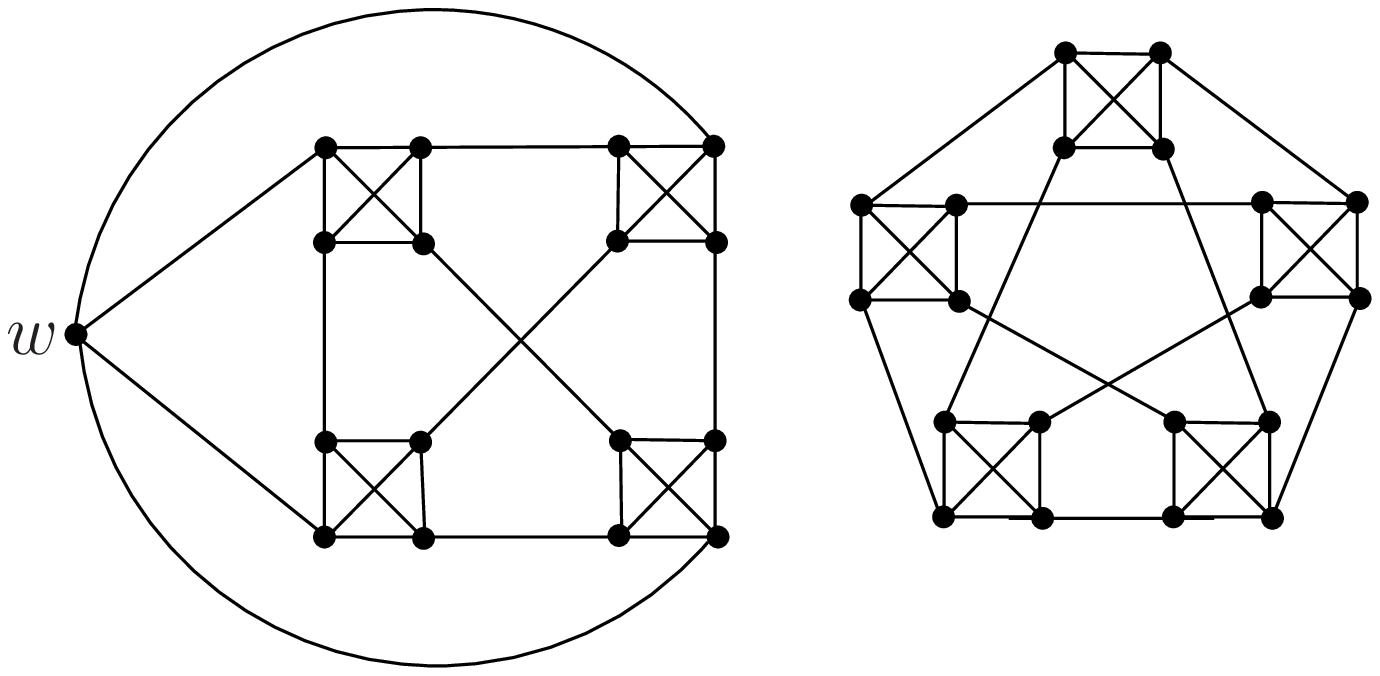}}
\centerline{Fig. 3\quad $^+S^2_4$ and $^{++}S^2_4$}
\end{center}
\end{figure}

\begin{corollary} For two integers $p\geq 2$ and $n\geq 1$, $$\tau(^+S_p^n)=\left \{
\begin{array}{ll}
p^{n-1}(p-2), & \mbox{if $p\geq 3$ } \\
1, & \mbox{if $p=2$}.
\end{array}
\right.
$$
\end{corollary}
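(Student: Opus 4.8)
The lower bound is immediate from the corresponding result for $S_p^n$. The induced subgraph ${}^+S_p^n[P^n]$ is exactly $S_p^n$, since the only edges incident with $w$ join $w$ to the extreme vertices $i^n$ and none of these lies inside $P^n$. Hence, for any feedback vertex set $X$ of ${}^+S_p^n$, the graph $({}^+S_p^n-X)-w=S_p^n-(X\cap P^n)$ is a subgraph of the forest ${}^+S_p^n-X$, so $X\cap P^n$ is a feedback vertex set of $S_p^n$. Combined with Theorem 2.4 this yields $\tau({}^+S_p^n)\ge |X\cap P^n|\ge\tau(S_p^n)=p^{n-1}(p-2)$, which is precisely the lower bound needed when $p\ge 3$.

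For $p=2$ the situation is handled directly. Here $S_2^n\cong P_{2^n}$ is a path whose two endpoints are exactly the extreme vertices $0^n$ and $1^n$, so joining $w$ to both of them closes this path into the single cycle $C_{2^n+1}$. Since $\tau(C_m)=1$ for every $m\ge 3$, we obtain $\tau({}^+S_2^n)=1$, as claimed.

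It remains to establish the matching upper bound $\tau({}^+S_p^n)\le p^{n-1}(p-2)$ for $p\ge 3$, equivalently to exhibit an induced forest of ${}^+S_p^n$ of order $(p^n+1)-p^{n-1}(p-2)=2p^{n-1}+1$. By Theorem 2.4 and $\tau(G)+f(G)=|G|$ we have $f(S_p^n)=2p^{n-1}$, so any induced forest of ${}^+S_p^n$ containing $w$ meets $P^n$ in at most $2p^{n-1}$ vertices. The task therefore reduces to finding a \emph{maximum} induced forest $F$ of $S_p^n$ to which $w$ may be appended without creating a cycle. Appending $w$ is safe exactly when the extreme vertices of $S_p^n$ that lie in $F$ occupy pairwise distinct components of $F$, because otherwise a path of $F$ joining two such vertices would close a cycle through $w$.

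The plan is to start from the maximum induced forest $S_p^n[Y_n]$ supplied by Lemma 2.3, whose only extreme vertices are $1^n$ and $2^n$, and to convert it into a maximum induced forest carrying a single extreme vertex. Concretely, I would delete $1^n$ and reinsert a suitable non-extreme neighbour $v\notin Y_n$ of $1^n$ inside its own copy, so that the order stays $2p^{n-1}$ while $2^n$ becomes the unique extreme vertex; then $w$ attaches to the modified forest as a single leaf, giving an induced forest of order $2p^{n-1}+1$. The main obstacle is exactly the legitimacy of this swap: one must produce a non-extreme neighbour $v$ of $1^n$ outside $Y_n$ whose two $Y_n$-neighbours fall into different components of $S_p^n[Y_n\setminus\{1^n\}]$, so that reinserting $v$ closes no cycle, and this verification leans on the recursive block-and-component description of $Y_n$ developed in the proof of Lemma 2.3. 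This construction requires $n\ge 2$; the base case $n=1$ is to be examined directly on the small graph ${}^+S_p^1$.
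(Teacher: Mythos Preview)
Your outline coincides with the paper's proof: lower bound via $S_p^n\subseteq{}^+S_p^n$, the cycle argument for $p=2$, and for $p\ge 3$ a modification of the Lemma~2.3 forest $Y_n$ so that $w$ may be adjoined as a leaf. Where you stop at the ``main obstacle'', the paper simply names the swap vertex, setting $Y_n'=(Y_n\setminus\{1^n\})\cup\{1^{n-1}0,\,w\}$ and asserting that ${}^+S_p^n[Y_n']$ is a forest of order $2p^{n-1}+1$. The choice $v=1^{n-1}0$ works because the block of $Y_n$ with head $1^{n-1}$ is $\{1^n,1^{n-1}2\}$, so $v\notin Y_n$; its only neighbours in $Y_n\setminus\{1^n\}$ are $1^{n-1}2\in Y_{n1}$ and $1^{n-2}01\in Y_{n2}$, which already lie in different components of $S_p^n[Y_n]$ by Claim~1 of Lemma~2.3, and after the swap the sole extreme vertex remaining is $2^n$, so $w$ is a leaf. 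You should commit to this explicit $v$ and carry the check through rather than leave it as a plan.

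Your instinct to isolate $n=1$ is in fact decisive, though not in the way you anticipated: every vertex of $S_p^1\cong K_p$ is extreme, so ${}^+S_p^1\cong K_{p+1}$ and $\tau({}^+S_p^1)=p-1$, not $p^{0}(p-2)=p-2$. The paper's own set $Y_1'=\{0,2,w\}$ induces a triangle here. Thus the stated formula and both arguments are valid only for $n\ge 2$ when $p\ge 3$; had you actually carried out the direct examination of ${}^+S_p^1$ that you proposed, you would have caught this.
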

\begin{proof} If $p=2$, $^+S_p^n$ is a cycle, and thus $\tau(^+S_p^n)=1$. Now let us consider the case when $p\geq 3$. Since $S_p^n$ is a subgraph of $^+S_p^n$, $\tau(^+S_p^n)\geq \tau(S_p^n)=p^{n-1}(p-2)$.
On the other hand, let $Y_n$ be the set as in the proof of Lemma 2.3.
Let $Y_n'=(Y_n\setminus \{1^n\})\cup \{1^{n-1}0, w\}$. It can be checked that $^+S_p^n[Y_n']$ is a forest of $^+S_p^n$ with  $|Y_n'|=2p^{n-1}+1$. Thus $\tau(^+S_p^n)\leq p^{n-1}(p-2)$, the result follows.
\end{proof}

\begin{corollary} For two integers $p\geq 2$ and $n\geq 1$, $\tau(^{++}S_p^n)=\tau(S_p^n)+\tau(S_p^{n-1}).$
\end{corollary}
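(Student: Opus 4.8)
The plan is to prove the two inequalities $\tau(^{++}S_p^n)\ge \tau(S_p^n)+\tau(S_p^{n-1})$ and $\tau(^{++}S_p^n)\le \tau(S_p^n)+\tau(S_p^{n-1})$ separately, after first fixing the structure of $^{++}S_p^n$. Its vertex set splits as the disjoint union $P^n\sqcup\{p\bar s\mid \bar s\in P^{n-1}\}$; the subgraph $A$ induced by $P^n$ is exactly $S_p^n$, the subgraph $B$ induced by $\{p\bar s\}$ is a copy of $S_p^{n-1}$ (via $\bar s\mapsto p\bar s$), and by Definition 4 the only edges joining $A$ and $B$ are the $p$ edges $M=\{\{i^n,\,pi^{n-1}\}\mid i\in P\}$. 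Everything rests on the fact that $A$ and $B$ are vertex-disjoint induced subgraphs linked solely through $M$.

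For the lower bound I would take a minimum feedback vertex set $X$ and set $X_A=X\cap P^n$, $X_B=X\cap\{p\bar s\}$. Since $A\cong S_p^n$ and $B\cong S_p^{n-1}$ are induced subgraphs, every cycle of $A$ (resp.\ $B$) is a cycle of $^{++}S_p^n$, so $X_A$ is a feedback vertex set of $A$ and $X_B$ of $B$. Hence $|X_A|\ge\tau(S_p^n)$ and $|X_B|\ge\tau(S_p^{n-1})$, and as $X=X_A\sqcup X_B$ we get $\tau(^{++}S_p^n)\ge \tau(S_p^n)+\tau(S_p^{n-1})$. This half is routine.

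The real work is the upper bound, equivalently the construction of an induced forest of order $f(S_p^n)+f(S_p^{n-1})=2p^{n-1}+2p^{n-2}$, where I use $f(S_p^m)=2p^{m-1}$ from Lemma 2.3 together with Theorem 2.4. I would take $F_A=Y_n$, the induced forest of $A=S_p^n$ from Lemma 2.3, and for $B$ the set $\{p\bar s\mid \bar s\in\pi(Y_{n-1})\}$, where $Y_{n-1}$ is the forest of Lemma 2.3 and $\pi$ is the automorphism of $S_p^{n-1}$ given by the coordinatewise action of a permutation of the symbol set $P$ chosen so that $\{\pi(1),\pi(2)\}=\{0,1\}$ (possible because $p\ge 3$). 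Both pieces are forests, of orders $2p^{n-1}$ and $2p^{n-2}$, so their union has exactly the desired order; the only danger is that edges of $M$ create a cycle upon gluing. Since a single edge joining two vertex-disjoint forests can never close a cycle, it suffices to show that at most one edge of $M$ has both endpoints inside $F_A\cup F_B$.

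To control $M$ I would first record the small observation that the only extreme vertices of $S_p^m$ lying in $Y_m$ are $1^m$ and $2^m$; this follows by induction on $m$ from $Y_m=C(Y_{m-1})$, since no element of $C_2$ has equal last two coordinates, while $i^m\in C_1(\{\underline s a,\underline s b\})$ forces $\underline s=i^{m-2}$ and $i\in\{a,b\}$, i.e.\ $i^{m-1}\in Y_{m-1}$. Consequently the extreme vertices in $F_A$ are indexed by $\{1,2\}$, and those in $B$'s forest by $\{\pi(1),\pi(2)\}=\{0,1\}$, so a matching edge $\{i^n,pi^{n-1}\}$ is internal only when $i\in\{1,2\}\cap\{0,1\}=\{1\}$. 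Thus at most one edge of $M$ is internal, $F_A\cup F_B$ induces a forest of order $2p^{n-1}+2p^{n-2}$, and the upper bound follows. The main obstacle is exactly this extreme-vertex bookkeeping: choosing the two seed pairs to overlap in at most one symbol (which needs $p\ge 3$) is what forces $M$ to contribute at most one connecting edge. Finally, I expect the degenerate cases to require separate, direct treatment: when $n=1$ the vertices $pi^{0}$ all coincide and $^{++}S_p^1\cong K_{p+1}$, and when $p=2$ only one $2$-subset of $P$ is available, so these should be examined on their own.
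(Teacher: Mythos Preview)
Your argument follows the paper's line: the lower bound comes from intersecting a minimum feedback vertex set with the two disjoint induced copies $A\cong S_p^n$ and $B\cong S_p^{n-1}$, and the upper bound comes from planting a Lemma~2.3 forest in each piece and controlling the $p$ matching edges $M$. The one real difference is the seed chosen for the $B$-forest. The paper starts it from $Y_1^*=\{3,4\}$, disjoint from $\{1,2\}$, so that \emph{no} edge of $M$ is internal and the union is trivially acyclic; you instead take (via a symbol permutation) the seed $\{0,1\}$, accept a single internal matching edge, and note that one bridge between two vertex-disjoint forests cannot create a cycle. Your variant is slightly more robust---it goes through for every $p\ge 3$, whereas the paper's literal choice $\{3,4\}$ requires $p\ge 5$---and your explicit identification of the extreme vertices of $Y_m$ as $1^m$ and $2^m$ is exactly the bookkeeping the paper suppresses when it asserts $E_{^{++}S_p^n}[Y_n,\,pY_{n-1}^*]=\emptyset$. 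Your caution about the boundary cases is also warranted: for $n=1$ one has $^{++}S_p^1\cong K_{p+1}$ with $\tau=p-1\ne(p-2)+0$, and for $p=2$ the graph $^{++}S_2^n$ is a single cycle with $\tau=1\ne 0$, so the identity as stated actually fails there.
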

\begin{proof} By the definition of $^{++}S_p^n$, clearly, $\tau(^{++}S_p^n)\geq \tau(S_p^n)+\tau(S_p^{n-1}).$
Next, we prove $\tau(^{++}S_p^n)\leq \tau(S_p^n)+\tau(S_p^{n-1}).$
Let $Y_n=C^{n-1}(Y_1)$ be the set defined as in the proof of Lemma 2.3, where $Y_1=\{1,2\}$. On the other hand,
we choose $Y_1^*=\{3,4\}$, and $Y_{n-1}^*=C^{n-2}(Y_1^*)$ as defined in the proof of Lemma 2.3. By the proof of Lemma 2.3, $^{++}S_p^n[Y_n]\cong S_p^n[Y_n]$ is a forest, and $^{++}S_p^n[pY_{n-1}^*]\cong S_p^n[Y_{n-1}^*]\cong S_p^n[Y_{n-1}]$ is a forest.
Moreover, $E_{^{++}S_p^n}[Y_n, pY_{n-1}^*]=\emptyset$. Thus $^{++}S_p^n[Y_n\cup pY_{n-1}^*]$ has no cycles, and $\tau(^{++}S_p^n)\leq \tau(S_p^n)+\tau(S_p^{n-1}).$
\end{proof}

\section{\large Sierpi\'{n}ski triangle graph}

A class of graphs that often has been mistaken for and also been called Sierpi\'{n}ski  graphs can be obtained from the latter by simply contracting all non-clique edges. We will call them Sierpi\'{n}ski triangle graphs.
Let $T:=\{0, 1, 2\}$ and $\hat{T}:=\{\hat{0},\hat{1},\hat{2}\}$.
Denote the vertex obtained by contracting the edge $\{\underline{s}ij^{n-v+1}, \underline{s}j i^{n-v+1}\}$ by $\underline{s}\{i, j \}$. Then the vertex set of $\hat{S_3^{n}}$ can be written as $$V(\hat{S_3^{n}})=\hat{T}\cup \{\underline{s}\{i,j\}|\ \underline{s}\in T^{v-1}, v\in [n], \{i,j\}\in \big(^T_2 \big)
\}.$$
Further, we simply replace each vertex $\underline{s}\{i, j \}$ by $\underline{s}k$, where $k=3-i-j$. So,
\begin{eqnarray*}
E(\hat{S_3^{n+1}})
&=&\{\{\hat{k},k^nj\}|\ k\in T,j\in T\setminus\{k\}\}\\
&\cup&\{\{\underline{s}k,\underline{s}j\}|\ \underline{s}\in T^{n},\{j,k\}\in \big(^T_2 \big)\}\\
&\cup& \{\{\underline{s}(3-i-j)i^{n-v}k,\underline{s}j\}|\ \underline{s}\in T^{v-1},v\in[n],i\in T,j,k\in T\setminus\{i\}\}.
\end{eqnarray*}

The Sierpi\'{n}ski triangle graph $\hat{S_3^3}$ is shown in Fig. 4.

\begin{figure}[!h]
\begin{center}
\scalebox {0.7}[0.7]{\includegraphics {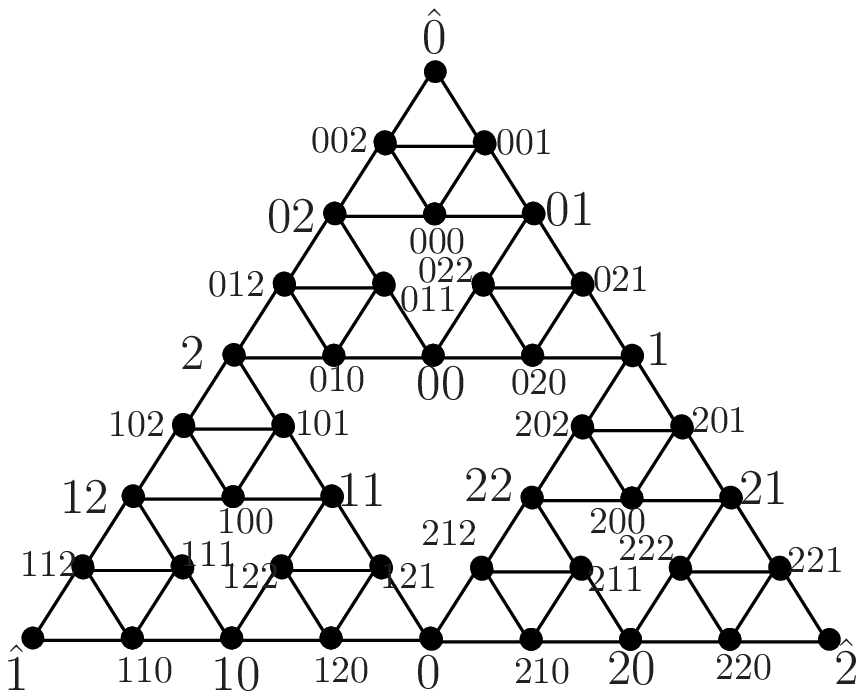} }
\centerline{Fig. 4\quad $\hat{S}_3^3$}
\end{center}
\end{figure}

\begin{lemma} (\cite{Jak})
If $p\in N$ and $n\in N_0
$, then $|\hat{S}^n_p|=\frac p 2(p^n+1)$ and $||\hat{S}^n_p||=\frac {p-1} 2 p^{n+1}$.
\end{lemma}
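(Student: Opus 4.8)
The plan is to analyze directly the effect of the edge contraction that defines $\hat{S}_p^n$. The first step is to understand the set $M$ of non-clique edges of $S_p^{n+1}$ that get contracted. I would show that $M$ is a perfect matching on the non-extreme vertices of $S_p^{n+1}$: a vertex $s_1\cdots s_{n+1}$ is an endpoint $\underline{s}ij^{l}$ of a non-clique edge precisely when its terminal run of equal coordinates has some length $l\geq 1$ (with $j=s_{n+1}$ the repeated symbol and $i=s_{n-l+1}\neq j$), and this $l$, together with the partner $\underline{s}ji^{l}$, is uniquely determined by the vertex. Consequently every non-extreme vertex lies in exactly one edge of $M$ while each of the $p$ extreme vertices (where the terminal run is the whole string) lies in none, so $|M|=\tfrac12(p^{n+1}-p)=\tfrac{p(p^n-1)}{2}$.

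Granting this, the vertex count is immediate: contracting a matching identifies the two ends of each of its edges and leaves all other vertices untouched, so $|\hat{S}_p^n|=p^{n+1}-|M|=p^{n+1}-\tfrac{p(p^n-1)}{2}=\tfrac{p}{2}(p^n+1)$. For the size, I would first compute $\|S_p^{n+1}\|$ by the handshake lemma from the degree data recorded after Definition~1 ($p$ extreme vertices of degree $p-1$ and $p^{n+1}-p$ vertices of degree $p$), giving $\|S_p^{n+1}\|=\tfrac{p(p^{n+1}-1)}{2}$. Contraction deletes exactly the $|M|$ contracted edges and otherwise preserves edges, provided no two surviving edges get identified into one; assuming this, $\|\hat{S}_p^n\|=\|S_p^{n+1}\|-|M|=\tfrac{p-1}{2}p^{n+1}$, which is exactly the claimed value.

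The main obstacle is therefore to prove that contracting $M$ creates no parallel edges, i.e. that $\hat{S}_p^n$ is simple with the above edge count. Two sources of multiplicity must be ruled out. First, the two ends $a=\underline{s}ij^{l}$ and $b=\underline{s}ji^{l}$ of a single matching edge must have no common neighbour; this holds because the clique-neighbours of $a$ all carry $i$ in coordinate $n-l+1$ while those of $b$ carry $j$, and the only non-clique neighbour of each is the other end. Second, for two distinct matching edges $\{a,b\}$ and $\{c,d\}$ there must be at most one edge of $S_p^{n+1}$ joining $\{a,b\}$ to $\{c,d\}$. Here I would use that the two ends of a non-clique edge never lie in a common $K_p$-copy, which rules out $c,d$ both being clique-neighbours of the same end; and that if, say, $ac$ and $bd$ were both (clique) edges with $\{c,d\}\in M$, then comparing $c=\underline{s}ij^{l-1}x$ and $d=\underline{s}ji^{l-1}y$ with the required form $\{\underline{t}ef^{l'},\underline{t}fe^{l'}\}$ of a matching edge forces $l'=l$, $\underline{t}=\underline{s}$, $\{e,f\}=\{i,j\}$, and hence $x=j$, contradicting $x\neq j$; the case $ad,bc$ is symmetric under $i\leftrightarrow j$. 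Once these two combinatorial checks are in place, the counts above yield the lemma, with $n=0$ (where $M=\emptyset$ and $\hat{S}_p^0\cong K_p$) serving as a consistency check.
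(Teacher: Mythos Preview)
The paper does not give its own proof of this lemma: it is quoted from \cite{Jak} and used as a black box. So there is nothing in the paper to compare against, and your proposal is in effect supplying a proof the paper omits.

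Your argument is correct. The identification of the non-clique edges of $S_p^{n+1}$ with a perfect matching $M$ on the non-extreme vertices is right, and the uniqueness you invoke (each non-extreme vertex $v$ has a unique factorisation $\underline{s}ij^{l}$ with $i\neq j$, namely $l$ equal to the length of the maximal terminal constant run) is exactly what is needed; the counts $|M|=\tfrac12(p^{n+1}-p)$, $|\hat S_p^n|=p^{n+1}-|M|$ and $\|\hat S_p^n\|=\|S_p^{n+1}\|-|M|$ then follow immediately.

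The only place I would tighten the write-up is the second simplicity check. Your claimed forcing ``$l'=l$, $\underline{t}=\underline{s}$, $\{e,f\}=\{i,j\}$'' is valid, but the cleanest way to see it is to note that in a matching pair $\{\underline{t}ef^{l'},\underline{t}fe^{l'}\}$ the two strings agree on positions $1,\dots,n-l'$ and differ at every position from $n-l'+1$ on; since your $c=\underline{s}ij^{l-1}x$ and $d=\underline{s}ji^{l-1}y$ first differ at position $n-l+1$, equating first-difference positions gives $l'=l$ and hence $\underline{t}=\underline{s}$. Reading off position $n-l+1$ then gives $\{e,f\}=\{i,j\}$, and comparing the last coordinate forces $x=j$, contradicting $x\neq j$. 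With this phrasing the case $l=1$ requires no separate treatment. The remaining sub-cases ($c,d$ both clique-neighbours of the same end, and the $ad,bc$ variant) are handled exactly as you say.
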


\begin{theorem}
(i) For any $n\geq 0$,
each minimum feedback vertex set of $\hat{S}_3^n$ contains at most one vertex in $\{\hat{0}, \hat{1}, \hat{2}\}$,

(ii) for $n\geq 0$, $\hat{S}_3^n$ has a minimum feedback vertex set $A_n$ such that $|A_n\cap \{\hat{0}, \hat{1}, \hat{2}\}|=1$,

(iii) for $n\geq 1$, $\tau(\hat{S}_3^n)=3\tau(\hat{S}_3^{n-1})-1$,

(iv) $\tau(\hat{S}_3^n)=\frac {3^n+1} 2=\frac{|\hat{S}_3^n|} 3.$

\end{theorem}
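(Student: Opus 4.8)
The plan is to prove (i), (ii) and (iii) together by induction on $n$, and then to read off (iv) by solving the recurrence in (iii). The base case $n=0$ is immediate: $\hat{S}_3^0\cong K_3$, so $\tau(\hat{S}_3^0)=1$ and any minimum feedback vertex set is a single vertex, which is one of $\hat{0},\hat{1},\hat{2}$; this settles (i) and (ii) for $n=0$. For the inductive step I would use the self-similar structure: $\hat{S}_3^n$ is the union of three copies $G_0,G_1,G_2$ of $\hat{S}_3^{n-1}$ in which $G_i$ and $G_j$ are glued at a single shared corner $u_{ij}$, with $u_{01},u_{02},u_{12}$ pairwise distinct and the outer corner $\hat{i}$ of the whole graph lying in $G_i$; the corners of $G_i$ are $\hat{i},u_{ij},u_{ik}$. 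A vertex and edge count against Lemma 3.1 confirms that exactly three vertices are identified and no edges are created, so this is a genuine corner-identification. The structural fact I would isolate first is that every cycle of $\hat{S}_3^n$ either lies inside a single $G_i$ or passes through all three of $u_{01},u_{02},u_{12}$: since two copies meet in only one vertex, a cycle leaving $G_i$ cannot return to it without running around through the third copy. In particular, deleting any single $u_{ij}$ destroys every cross-copy cycle.

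With this in hand I would prove the lower bound $\tau(\hat{S}_3^n)\ge 3\tau(\hat{S}_3^{n-1})-1$ together with (i). Let $X$ be a minimum feedback vertex set and put $X_i=X\cap V(G_i)$; since $G_i$ is an induced subgraph, each $X_i$ is a feedback vertex set of $G_i$, so $|X_i|\ge\tau(\hat{S}_3^{n-1})$, and the only double-counting is at shared corners, whence $|X|=|X_0|+|X_1|+|X_2|-s$ with $s=|X\cap\{u_{01},u_{02},u_{12}\}|$. I would then run a short case analysis on $s$. The key point is that any two of the three shared corners lie in a common copy, so whenever a copy receives two of its corners it cannot be minimum, i.e. that $|X_i|\ge\tau(\hat{S}_3^{n-1})+1$ by the inductive form of (i); checking $s\in\{0,1,2,3\}$ then gives $|X|\ge 3\tau(\hat{S}_3^{n-1})-1$ in every case. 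The same bookkeeping yields (i): if $X$ contained two outer corners $\hat{i}$, then combining with the equality $|X|=3\tau(\hat{S}_3^{n-1})-1$ forces some copy whose minimum-sized $X_i$ contains two corners of $G_i$, contradicting (i) for $\hat{S}_3^{n-1}$.

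For the upper bound and (ii) I would build a set of the right size. Using (ii) for $\hat{S}_3^{n-1}$ together with the symmetry of $\hat{S}_3^{n-1}$ permuting its three corners, each $G_i$ has a minimum feedback vertex set whose unique corner is any prescribed one. I would take $A_0$ and $A_1$ both to use the shared corner $u_{01}$ and $A_2$ to use the outer corner $\hat{2}$, and set $X=A_0\cup A_1\cup A_2$. Inside each copy $X$ kills every cycle, and every cross-copy cycle is killed because $u_{01}\in X$, so $X$ is a feedback vertex set; the only overlap is the single vertex $u_{01}$, so $|X|=3\tau(\hat{S}_3^{n-1})-1$, matching the lower bound and establishing (iii). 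Since $X$ meets $\{\hat{0},\hat{1},\hat{2}\}$ only in $\hat{2}$, this also proves (ii) for $\hat{S}_3^n$. Finally (iv) follows by solving $\tau(\hat{S}_3^n)=3\tau(\hat{S}_3^{n-1})-1$ with $\tau(\hat{S}_3^0)=1$, giving $\tau(\hat{S}_3^n)=\frac{3^n+1}{2}$, which equals $\frac{|\hat{S}_3^n|}{3}$ by Lemma 3.1.

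I expect the main obstacle to be the interface between the whole graph and its three subcopies: making precise that the copies overlap in exactly the three corners and that every non-local cycle must traverse all three shared corners, and then tracking corner memberships carefully enough in the $s$-analysis that the hypothesis (i) can be invoked on precisely the right copy. The delicate feature is that the saving of exactly $1$ (rather than $0$ or more) is governed entirely by how many shared corners a minimum set can absorb, and this is exactly what (i) and (ii) are designed to control; closing the induction therefore requires proving (i)--(iii) as one package rather than in isolation.
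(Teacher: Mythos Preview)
Your proposal is correct and follows essentially the same inductive scheme as the paper: decompose $\hat{S}_3^n$ into three copies of $\hat{S}_3^{n-1}$ glued at the three inner corners, run a case analysis on how many of these shared corners lie in a minimum feedback vertex set (using the inductive form of (i) to rule out savings larger than~$1$), and build the matching upper-bound set by aligning the distinguished corners of the three subcopy solutions so that exactly one coincidence occurs. Your explicit isolation of the fact that every cross-copy cycle must pass through all three shared corners is a clean addition that the paper leaves as ``it can be checked''; otherwise the arguments coincide up to the symmetric choice of which shared corner and which outer corner to use in the construction.
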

\begin{proof} By induction on $n$. For convenience, let $a_n=\tau(\hat{S}_3^n)$ and $V_n=V(\hat{S}_3^n)$ for any integer $n\geq 0$.
If $n=0$, $\hat{S}_3^n\cong K_3$. Trivially, $a_0=1$, each minimum feedback vertex $\hat{S}_3^0$ consist of exactly one element of $\{\hat{0}, \hat{1}, \hat{2}\}$. Let $A_0=\{\hat{0}\}$.

\begin{figure}
\begin{center}
\scalebox {0.5}[0.5]{\includegraphics {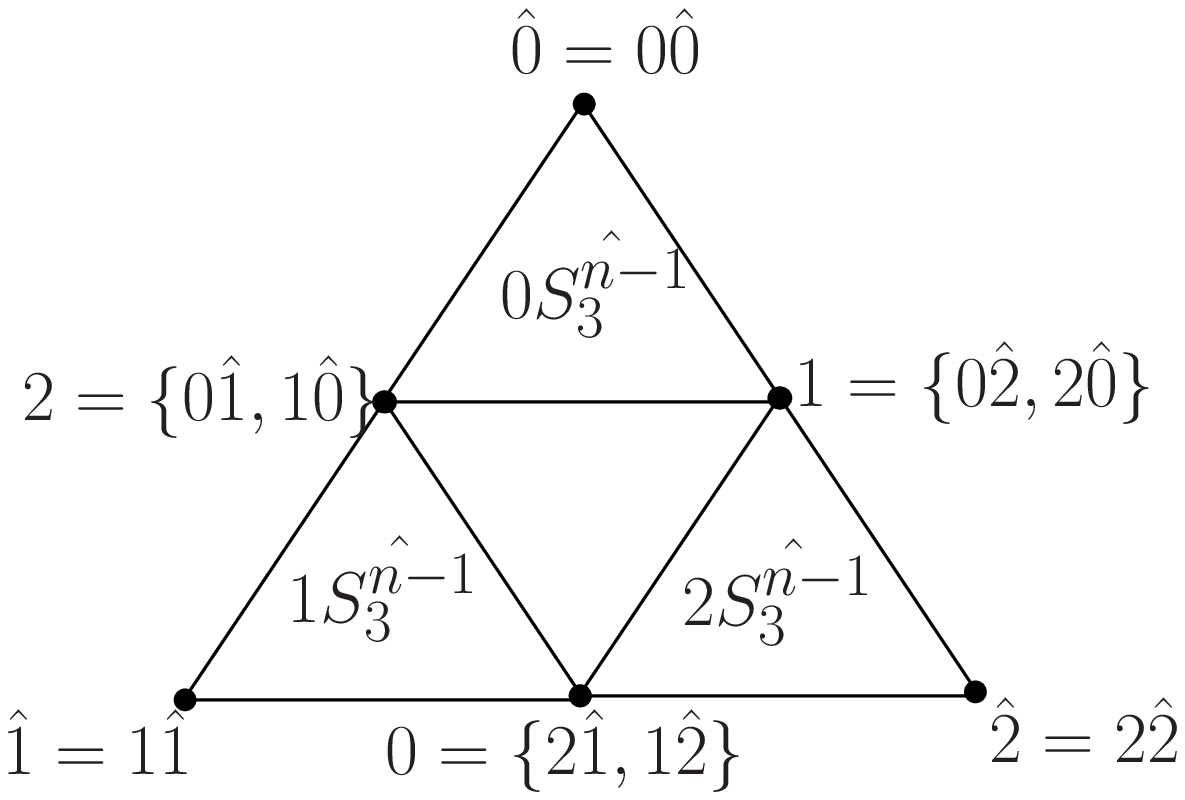} }
\centerline{Fig. 5\quad Relation between $\hat{S}_3^n$ and $iS_3^{n-1}$ for each $i\in T$}
\end{center}
\end{figure}

Now assume that $n\geq 1$ and the results holds for $n-1$.

First we prove (iii). By induction hypothesis and the symmetry of $\hat{S}_3^{n-1}$, there exists a minimum feedback vertex set  $A_{n-1}$  of $\hat{S}_3^{n-1}$ containing the vertex $\hat{0}$. To prove $a_n\leq 3a_{n-1}-1$,
let $V_n^i=\{ia|\ a\in V_{n-1}\}$ for each $i\in T$, where $0\hat{0}=\hat{0}$, $1\hat{1}=\hat{1}$, and $2\hat{2}=\hat{2}$; $0\hat{1}=2=1\hat{0}$,  $0\hat{2}=1=2\hat{0}$, and $2\hat{1}=0 = 1\hat{2}$ (see Fig. 5 for an illustration).

One can see that $V_n=V_n^0\cup V_n^1\cup V_n^2$ and $\hat{S}_3^n[V_n^j]\cong \hat{S}_3^{n-1}$ for each $j\in T$. For each $j\in T$, we define an isomorphism $f_j$ between $\hat{S}_3^{n-1}$ and $j\hat{S}_3^{n-1}$  as follows:

(1) $f_0(\hat{0})=\hat{0},  f_0(\hat{1})=2, f_0(\hat{2})=1$;

(2) $f_1(\hat{0})=0,  f_1(\hat{1})=2, f_1(\hat{2})=\hat{1}$;

(3) $f_2(\hat{0})=0,  f_2(\hat{1})=\hat{2}, f_2(\hat{2})=1$.

\vspace{2mm} Let $A_n=\bigcup_{j\in T}f_j(A_{n-1}).\qquad \qquad \qquad \qquad \qquad \qquad (*)$
\begin{figure}[!h]
\begin{center}
\scalebox {0.7}[0.7]{\includegraphics {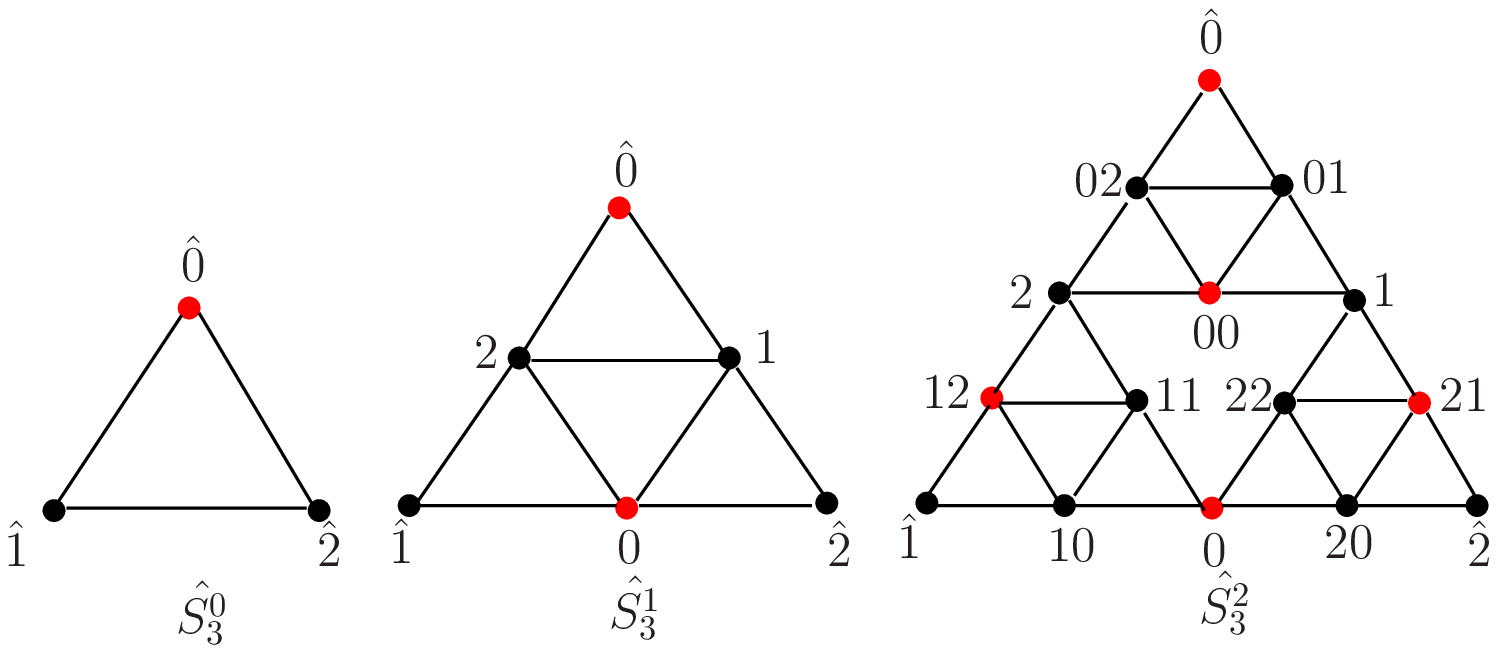} }
\centerline{Fig. 6\quad Minimum feedback vertex set $A_n$ (red vertices) of $\hat{S}_3^n$}
\centerline{ for $n\in\{0, 1, 2\}$ with the property (ii)}
\end{center}
\end{figure}

It can be checked that $A_n$ is a feedback vertex set of $\hat{S}_3^n$ with $$|A_n|=3a_{n-1}-1. \qquad   (**)$$ So, $a_n\leq 3a_{n-1}-1$.

Next we prove $a_n\geq 3a_{n-1}-1$. Let $A_n$ be a minimum feedback vertex set of $\hat{S}_3^{n}$, and let $A_n^i=A_n \cap i\hat{S}_3^{n-1}$ for each $i\in T$. Since $i\hat{S}_3^{n-1}\cong \hat{S}_3^{n-1}$, $A_n^i$ is a feedback vertex set of $i\hat{S}_3^{n-1}$. Hence, $|A_n^i|\geq a_{n-1}$. Note that $|A_n|=\sum_{i=0}^2|A_n^i|-|A_n\cap \{0, 1, 2\}|$.
If $|A_n\cap \{0, 1, 2\}|\leq 1$, then $a_n=|A_n|\geq 3a_{n-1}-1$.
Assume that $|\{0, 1, 2\}\cap A_n|=2$, and without loss of generality,
let $\{1, 2\}\subseteq A_n$. By induction hypothesis (i),
$|A_n^0|\geq a_{n-1}+1$. Hence $a_n\geq a_{n-1}+1+a_{n-1}-1+a_{n-1}-1=3a_{n-1}-1.$
If $|\{0, 1, 2\}\cap A_n|=3$, by induction hypothesis (i),
$|A_n^j|\geq a_{n-1}+1$ for each $j\in \{0, 1, 2\}$. Hence
$a_n\geq a_{n-1}+1+a_{n-1}+1+a_{n-1}+1-3=3a_{n-1}.$

This proves (iii).

\vspace{3mm} Let $A_n$ be the set constructed as $(*)$ in the proof of (iii). By $(**)$ and (iii), $A_n$ is a minimum feedback vertex set with $\hat{0}\in A_n$ and $\hat{1} \not\in A_n, \hat{2} \not\in A_n$ (for an illustration, see Fig. 6). This proves (ii).

\vspace{2mm} To prove (i), suppose that $X_n$ is a minimum vertex set of $\hat{S}_3^n$ with $|X_n\cap \{\hat{0}, \hat{1}, \hat{2}\}|\geq 2$.
Let $X_n^i=X_n \cap i\hat{S}_3^{n-1}$ for each $i\in T$. Since $i\hat{S}_3^{n-1}\cong \hat{S}_3^{n-1}$, $X_n^i$ is a feedback vertex set of $i\hat{S}_3^{n-1}$. Hence, $|X_n^i|\geq a_{n-1}$. Note that $|X_n|=\sum_{i=0}^2|X_n^i|-|X_n\cap \{0, 1, 2\}|$.

If $|\{0, 1, 2\}\cap X_n|=0$, $|X_n^j|\geq a_{n-1}$ for each $j\in \{0, 1, 2\}$.  Hence
$a_n\geq a_{n-1}+a_{n-1}+a_{n-1}=3a_{n-1}$, contradicting (iii). If $|\{0, 1, 2\}\cap X_n|=1$, by induction hypothesis (i), $|X_n^j|\geq a_{n-1}+1$ for some $j\in \{0, 1, 2\}$.
Thus $$a_n=|X_n|=\sum_{j=0}^2 |X_n^j|-1\geq (a_{n-1}+a_{n-1}+a_{n-1}+1)-1=3a_{n-1},$$ contradicting (iii).
If $|\{0, 1, 2\}\cap X_n|=2$, then at least two of $|X_n^1|, |X_n^2|, |X_n^3|$ are greater or equal to $a_{n-1}+1$.
Thus $a_n=|X_n|=\sum_{j=0}^2 |X_n^j|-2\geq (a_{n-1}+a_{n-1}+1+a_{n-1}+1)-2=3a_{n-1},$ contradicting (iii). If $|\{0, 1, 2\}\cap X_n|=3$, by induction hypothesis (i),
$|X_n^j|\geq a_{n-1}+1$ for each $j\in \{0, 1, 2\}$. Hence
$a_n\geq a_{n-1}+1+a_{n-1}+1+a_{n-1}+1-3=3a_{n-1}$, contradicting (iii). This proves (i).

\vspace{3mm}
Finally we prove (iv). Since $a_0=1$ and by (iii), we have $\tau(\hat{S}_3^n)=\frac {3^n+1} 2$. Moreover, by Lemma 3.1, $\tau(\hat{S}_3^n)=\frac{|\hat{S}_3^n|} 3.$
\end{proof}

\section{\large Generalized Sierpi\'{n}ski triangle graph $\hat{S_p^n}$ for $p\geq 4$ }

By Definition 2, for $p\in N$ and $n\in N_{0}$, the generalized Sierpi\'{n}ski triangle graph $\hat{S_p^n}$ is obtained by contracting all non-clique edges from the Sierpi\'{n}ski graph $S_p^{n+1}$. We denote the vertex obtained from contracting the edge $\{\underline{s}ij^{n-v+1}, \underline{s}j i^{n-v+1}\}\in E(S_p^{n+1} )$ by $\underline{s}\{i, j \}$. Then
$$V(\hat{S_p^n})=\hat{P}\cup \{\underline{s}\{i,j\}|\ \underline{s}\in P^{v-1}, v\in [n], \{i,j\}\in \big(^P_2 \big)                                                                                                                                                     \},$$ where $\hat{P}=\{\hat{k}\mid k\in P\}$, and
\begin{eqnarray*}
E(\hat{S_p^{n+1}})
&=&\{\{\hat{k},k^n\{j,k\}\}|\ k\in P,j\in P\setminus\{k\}\}\\
&\cup&\{\{\underline{s}\{i,j\},\underline{s}\{i,k\}\}|\ \underline{s}\in P^n, i\in P, \{j,k\}\in \big(^{P\setminus\{i\}}_{\ 2} \big)\}\\
&\cup& \{\{\underline{s}ki^{n-v}\{i,j\},\underline{s}\{i,k\}\}|\ \underline{s}\in P^{v-1},v\in[n],i\in P,j,k\in P\setminus\{i\}\}.
\end{eqnarray*}

The Sierpi\'{n}ski triangle graphs $\hat{S}_6^{2}$ and $\hat{S}_5^{2}$ are shown in Fig. 7 and  in Fig. 8, respectively. Trivially, $\hat{S_p^0}\cong K_p$. It is not hard to see that for $n\leq 2$, if $p$ is even,
\begin{figure}[!h]
\begin{center}
\scalebox {0.6}[0.6]{\includegraphics {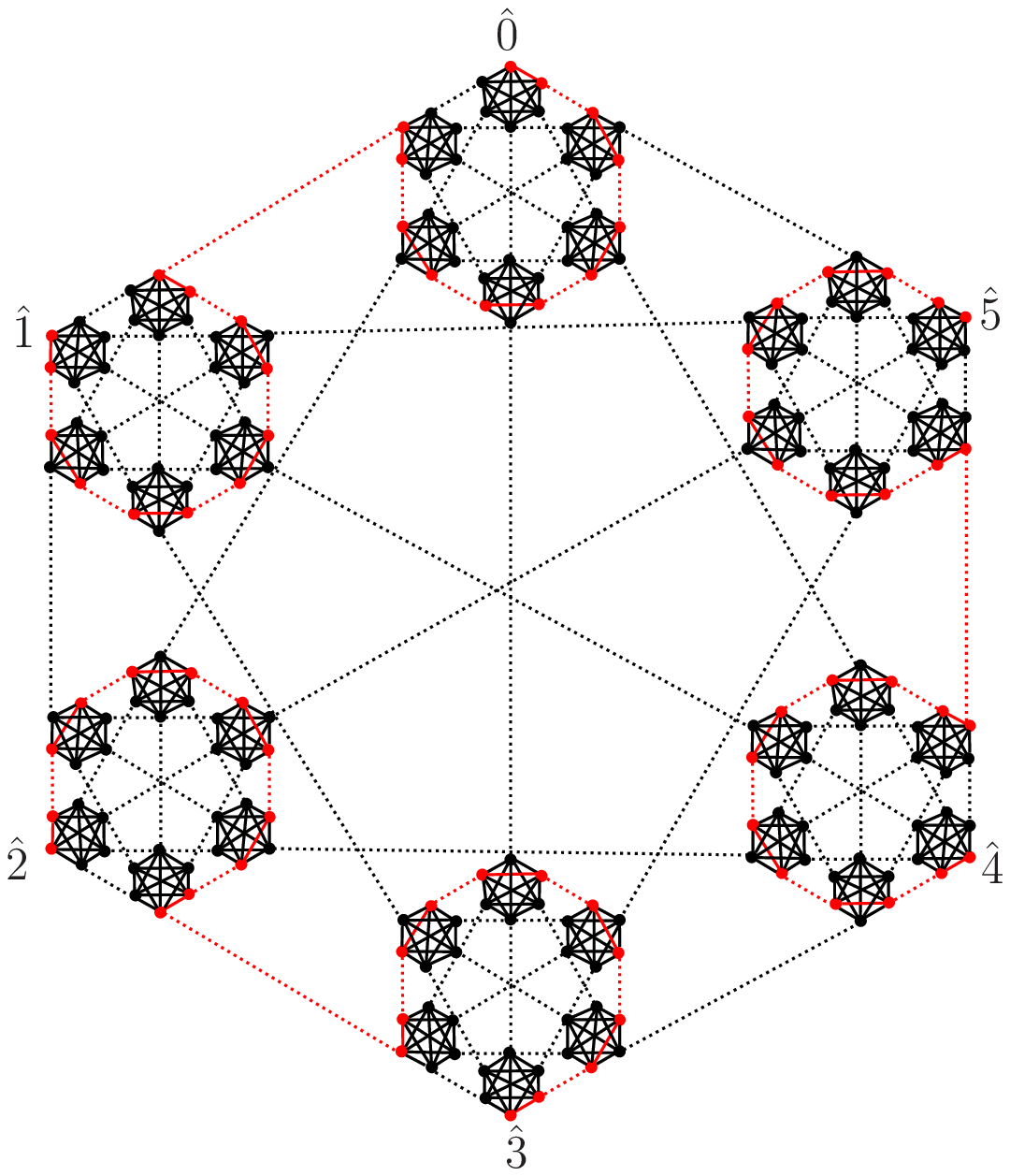}}
\centerline{Fig. 7\quad  $\hat{S_6^2}$, where a dotted edge of $S_6^3$ represents a vertex of $\hat{S_6^2}$ obtained from contracting it}
\centerline{  $\hat{S_6^2}[B_2^*]$ is the subgraph with colored in red }
\end{center}
\end{figure}

$$f(\hat{S_p^n})=\left \{
\begin{array}{ll}
2, & \mbox{if $n=0$ } \\
\frac{3p}{2}, & \mbox{if $n=1$ } \\
p^2+ \frac{p}{2}, & \mbox{if $n=2$}.
\end{array}
\right.
$$
If $p$ is odd,
$$f(\hat{S_p^n})=\left \{
\begin{array}{ll}
2, & \mbox{if $n=0$ } \\
\frac{3p-1}{2}, & \mbox{if $n=1$ } \\
p^2+\frac{p}{2}-\frac{1}{2}, & \mbox{if $n=2$}.
\end{array}
\right.
$$

In view of Section 3, it remains to consider the case when $p\geq 4$.

\begin{theorem} For two integers $n\geq 3$ and $p\geq 4$,
$$f(\hat{S_p^n})\geq \left \{
\begin{array}{ll}
p^n-\frac{p^{n-1}} 8 +\frac{p^{n-2}+\cdots+p} 8+\frac{5p} 8, & \mbox{if $p$ is even } \\
p^n-\frac{p^{n-1}+p^{n-2}-5p+3} 8, & \mbox{if $p$ is odd}.
\end{array}
\right.
$$
\end{theorem}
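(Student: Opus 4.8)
The plan is to prove the equivalent statement as a lower bound on $f(\hat{S}_p^n)$ by exhibiting an explicit induced forest of the required size, constructed recursively on $n$. The backbone is the same self-similar decomposition that drove Section~3: $\hat{S}_p^n$ is the union of $p$ copies $0\hat{S}_p^{n-1},\dots,(p-1)\hat{S}_p^{n-1}$ with $i\hat{S}_p^{n-1}\cong\hat{S}_p^{n-1}$, where copies $i$ and $j$ are glued along a single shared contact vertex $c_{ij}=i\hat{\jmath}=j\hat{\imath}$; there are exactly $\binom{p}{2}$ such contacts, while the $p$ diagonal corners $\hat{0},\dots,\widehat{p-1}$ remain as the extreme vertices. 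I would first record this decomposition together with the induced isomorphisms $f_i\colon\hat{S}_p^{n-1}\to i\hat{S}_p^{n-1}$, exactly as for $p=3$ but for general $p$, and observe that the claimed formula satisfies $\phi(n)=p\,\phi(n-1)-L_p$ for $n\geq 3$, where $\phi(n)$ denotes the target bound, $L_p=\tfrac{p(5p-6)}{8}$ for $p$ even and $L_p=\tfrac{(5p-3)(p-1)}{8}$ for $p$ odd; the base value $\phi(2)$ is the stated value of $f(\hat{S}_p^2)$.

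The construction then proceeds by a strengthened induction. Rather than only asserting $f(\hat{S}_p^n)\geq\phi(n)$, I would carry along an explicit forest $B_n$ with $|B_n|=\phi(n)$ together with a prescribed behavior at each of its $p$ corners: every corner is either absent from $B_n$, an isolated vertex of $\hat{S}_p^n[B_n]$, or a leaf of it, and the pattern of these three states is fixed. The base of the induction is $n=2$, where an explicit $B_2$ (as in the red subgraph $\hat{S}_6^2[B_2^*]$ of Fig.~7) has $|B_2|=\phi(2)$ and a corner pattern read off directly. For the inductive step I would set $B_n:=\bigl(\bigcup_{i\in P}f_i(B_{n-1})\bigr)\setminus R$, where $R$ is a small deletion set chosen so that the cycles created by the gluing are destroyed while the corner invariant is maintained.

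The counting is where the work concentrates. The union of the $p$ images contributes $p\,\phi(n-1)$, but a contact $c_{ij}$ present in both copies is over-counted once, and, more seriously, a contact whose two corner–preimages are both non-isolated links the two copies and can close a macro-cycle. Viewing the copies as super-nodes and the connecting contacts as super-edges, $\hat{S}_p^n[B_n]$ is acyclic precisely when this super-graph on $p$ nodes is a forest, i.e. when at most $p-1$ contacts connect. I would therefore choose the corner pattern of $B_{n-1}$ and the labelings $f_i$ so that only a spanning-forest's worth of contacts connect, with the preimages of the remaining $\binom{p-1}{2}$ contacts isolated or absent, and then let $R$ absorb the residual over-counts and deletions. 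The acyclicity of $\hat{S}_p^n[B_n]$ then reduces, exactly as in Claims~1--3 of Lemma~2.3, to checking that no cycle runs through the retained contacts and that each copy's forest is undisturbed away from its corners; the only new verification is that the chosen connecting contacts induce no macro-cycle. The quantitative goal is to show the net loss $p\,\phi(n-1)-|B_n|$ equals $L_p$, after which the closed form follows by unwinding the recursion from $\phi(2)$.

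The hardest part is the simultaneous optimization hidden in the corner pattern. Since the macro-structure carries only $\binom{p}{2}$ contacts but $\binom{p-1}{2}$ independent cycles, removing a whole contact for each cycle would cost $\Theta(p^2)$, whereas the recursion permits an \emph{extra} loss of only $\tfrac{p(p-2)}{8}$ (even) or $\tfrac{(p-1)(p-3)}{8}$ (odd) beyond the $\approx\binom{p}{2}$ over-count; this is roughly a quarter of the naive figure, and it is exactly this surplus that accumulates to the $-\tfrac{p^{n-1}}{8}$ term in the final bound. Achieving it forces the deletions to overlap between adjacent copies and to break many macro-cycles per removed vertex, which in turn demands a careful, parity-dependent bookkeeping of which corners are isolated, which are leaves, and how the sets $R$ in neighbouring copies share vertices. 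I expect this balancing act — designing one invariant corner configuration that simultaneously controls over-counting and macro-acyclicity at the precise cost $L_p$ — to be the main obstacle, and it is where the two separate formulas for even and odd $p$ arise.
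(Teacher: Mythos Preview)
Your framework is exactly the paper's: take $p$ copies of a level-$(n-1)$ forest, glue along the $\binom{p}{2}$ contacts, delete a small set $R$, and verify the recursion $|B_n|=p\,|B_{n-1}|-L_p$ with precisely the $L_p$ you computed. Where you diverge is in the invariant you plan to carry. You propose a three-state corner pattern (absent/isolated/leaf), copy-dependent isomorphisms $f_i$, and an arrangement in which $\binom{p-1}{2}$ contacts fail to connect so that the super-graph on copies is a spanning forest; you then flag the resulting ``balancing act'' as the main obstacle and leave it unresolved.

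The paper sidesteps that obstacle with a stronger and much simpler invariant: $B_{n-1}^*$ is always a \emph{linear} forest that contains \emph{all} $p$ corners, each as a path endpoint, with the corners paired consecutively. For $p$ even one has paths $A_s^{n-1}$ joining $\hat s$ to $\widehat{s+1}$ for every even $s\in S=\{0,2,\dots,p-2\}$; for $p$ odd the same pairing is used on $S=\{0,2,\dots,p-3\}$ together with one extra short path $T'$ terminating at $\widehat{p-1}$. With the plain ``prepend $j$'' isomorphisms, the glued macro-structure then decomposes cleanly: the diagonal pieces $sA_s^{n-1}\cup(s+1)A_s^{n-1}$ form the new corner-paths $A_s^n$, while for each unordered pair $s_1\neq s_2$ in $S$ the four cross-pieces $s_1A_{s_2}^{n-1},\,(s_1{+}1)A_{s_2}^{n-1},\,s_2A_{s_1}^{n-1},\,(s_2{+}1)A_{s_1}^{n-1}$ concatenate through the contacts $\{s_1,s_2\},\{s_1,s_2{+}1\},\{s_1{+}1,s_2{+}1\},\{s_1{+}1,s_2\}$ into a single $4$-segment cycle. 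These $\binom{|S|}{2}$ cycles are pairwise vertex-disjoint (and disjoint from the diagonal paths), so deleting one vertex from each suffices; the paper takes $R=A=\{\{s_1,s_2\}:s_1,s_2\in S,\ s_1\neq s_2\}$, of size $\binom{p/2}{2}$ (even $p$) or $\binom{(p-1)/2}{2}$ (odd $p$). The total loss is then $\binom{p}{2}+\binom{|S|}{2}=L_p$, and the linear-forest invariant with the same consecutive pairing is reproduced verbatim at level $n$. Thus no corner is ever absent or isolated, no relabelling per copy is needed, and the difficulty you anticipated dissolves once the consecutive-pairing idea is in place.
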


\begin{proof}
Let $V_n=V(\hat{S_p^n})$ and $iB_n^*=\{ib|\ b\in B_n^*\}$ for each $i\in P$, where $i \hat{i}=\hat{i}$ and $i\hat{j}=j\hat{i}=\{i, j\}$, $B_n^*\subseteq V_n$ will be defined recursively in terms of the parity of $p$ as follows.

\vspace{3mm}\noindent{\bf Case 1.} $p$ is even.

\vspace{2mm} Let $S=\{0,2,\cdots,p-2\}$ and $A=\{\{s_1,s_2\}|\ s_1, s_2\in S, s_1\neq s_2 \}$. Note that $A\subseteq V_n$.

For any $s\in S$, let $A_s^2=\{\hat{s},\hat{s+1},\{s, s+1\}, s\{i,i+1\},(s+1)\{i,i+1\}|\ i\in P\setminus \{s\}\}\subseteq V_2.$ It is easy to see that $\hat{S}_p^{2}[A_s^2]$ is path of length $2p$ connecting $\hat{s}$ and $\hat{s+1}$. For any $k\geq 3$, some subsets of $V_k$ defined by $A_s^k=sA_s^{k-1}\cup (s+1)A_s^{k-1}$, and further

$$B_k'=\bigcup_{s_1,s_2\in S, s_1\neq s_2 }(s_1A_{s_2}^{k-1}\cup (s_1+1)A_{s_2}^{k-1}\cup s_2A_{s_1}^{k-1}\cup (s_2+1)A_{s_1}^{k-1}),$$
$$B_k=\bigcup_{s_1, s_2\in S, s_1\neq s_2 }(s_1A_{s_2}^{k-1}\cup (s_1+1)A_{s_2}^{k-1}\cup s_2A_{s_1}^{k-1}\cup (s_2+1)A_{s_1}^{k-1})\setminus \{s_1, s_2\}),$$

$$ B_k^{0}=B_k, B_k^{n-k}=\{\underline{j}b|\ \underline{j}\in P^{n-k},b\in B_k\}.$$

By the definition above, one can see that $B_k'=B_k\setminus A$, $\bigcup_{j\in P}jB_k^{n-1-k}=B_k^{n-k}$, and  $A\cap (\bigcup_{s\in S}A_s^n)=\emptyset$, $A\cap B_k^{n-k}=\emptyset$. Let $B_2^*=\bigcup_{s\in S}A_s^2$. It can be seen that $\hat{S_p^2}[ B_2^*]$ is a linear forest (see $\hat{S_6^2}[ B_2^*]$ is depicted in red color in Fig. 7).

\vspace{2mm} \noindent{\bf Claim 1.} For any $k\geq 3$, let $B_k^*=(\bigcup_{j\in P}jB_{k-1}^*)\setminus A$. Then

\vspace{1mm} (i) $B_n^*=(\bigcup_{s\in S}A_s^n)\cup (\bigcup_{3\leq k\leq {n}}B_k^{n-k})$,

(ii) for each $s\in S$, $\hat{S_p^n}[A_{s}^{n}]$ is a path of order $2^{n-1}p+1$ joining $\hat{s}$ and $\hat{s+1}$,

(iii) for any $3\leq k\leq n$, $\hat{S_p^n}[B_k^{n-k}]$ is a linear forest consisting of $p^{n-k}\big(^{\frac{p}{2}}_2\big)$ paths of order $2^kp-1$,

(iv) $\hat{S_p^n}[B_n^*]$ is a linear forest.

\begin{proof} By induction on $n$. First we prove the statements hold for $n=3$.
Since $B_2^*=\bigcup_{s\in S}A_s^2$,
\begin{eqnarray*}
B_3^*
&=&(\bigcup_{j\in P}jB_{2}^*)\setminus A
=(\bigcup_{j\in P}j\bigcup_{s\in S}A_s^2)\setminus A\\
&=&(\bigcup_{s\in S}(sA_s^2\cup(s+1)A_s^2\cup(\bigcup_{j\in P\setminus\{s,s+1\}}jA_s^2)))\setminus A\\
&=&(\bigcup_{s\in S}(sA_s^2\cup (s+1)A_s^2)\cup(\bigcup_{s\in S}\bigcup_{j\in P\setminus\{s,s+1\}}jA_s^2))\setminus A\\
&=&(\bigcup_{s\in S}A_s^3)\cup B_3'\setminus A\\
&=&\bigcup_{s\in S}A_s^3\cup B_3,\\
\end{eqnarray*}
proving (i).

\vspace{2mm}
Now we prove (ii). Since $\hat{S_p^{3}}[A_s^3]=\hat{S_p^{3}}[sA_s^2\cup(s+1)A_s^2]$,  $\hat{S_p^{3}}[sA_s^2]\cong\hat{S_p^{3}}[(s+1)A_s^2]\cong\hat{S_p^{2}}[A_s^2]$. Recall that for a fixed $s\in S$, $\hat{S}_p^{2}[A_s^2]$ is path of order $2p+1$ connecting $\hat{s}$ and $\hat{s+1}$. It follows that
$\hat{S_p^{3}}[sA_s^2]$ and $\hat{S_p^{3}}[(s+1)A_s^2]$ are paths of order $2p+1$ joining $s\hat{s}$ and $s\hat{(s+1)}$, $(s+1)\hat{s}$ and $(s+1)\hat{(s+1)}$, respectively.
Moreover, since $s\hat{s}=\hat{s}$, $(s+1)\hat{(s+1)}=\hat{(s+1)}$ and $s\hat{(s+1)}=(s+1)\hat{s}=\{s, s+1\}$,
$\hat{S_p^{3}}[A_s^3]$ is a path connecting $\hat{s}$ and $\hat{s+1}$ of order $2(2p+1)-1=4p+1$.

\vspace{2mm} To prove (iii), let us consider a pair of element $s_1, s_2\in S$ with $s_1\neq s_2$. Since $$\hat{S_p^{3}}[s_1A_{s_2}^2]\cong\hat{S_p^{3}}[(s_1+1)A_{s_2}^2]\cong
\hat{S_p^{3}}[s_2A_{s_1}^2]\cong\hat{S_p^{3}}[(s_2+1)A_{s_1}^2]\cong\hat{S_p^{2}}[A_s^2],$$
$\hat{S_p^{3}}[s_1A_{s_2}^2]$, $\hat{S_p^{3}}[(s_1+1)A_{s_2}^2]$, $\hat{S_p^{3}}[s_2A_{s_1}^2]$, $\hat{S_p^{3}}[(s_2+1)A_{s_1}^2]$ are  paths of order $2p+1$ joining $s_1\hat{s_2}$ and $s_1\hat{(s_2+1)}$, $(s_1+1)\hat{s_2}$ and $(s_1+1)\hat{(s_2+1)}$, $s_2\hat{s_1}$ and $s_2\hat{(s_1+1)}$, $(s_2+1)\hat{s_1}$ and $(s_2+1)\hat{(s_1+1)}$, respectively. Moreover,
$s_1\hat{s_2}=s_2\hat{s_1}=\{s_1, s_2\}$, $s_1\hat{(s_2+1)}=(s_2+1)\hat{s_1}=\{s_1, s_2+1\}$, $s_2\hat{(s_1+1)}=(s_1+1)\hat{s_2}=\{s_1+1, s_2\}$, $(s_2+1)\hat{(s_1+1)}=(s_1+1)\hat{(s_2+1)}=\{s_1+1, s_2+1\}$. Hence $\hat{S_p^{3}}[s_1A_{s_2}^3\cup (s_1+1)A_{s_2}^3\cup s_2A_{s_1}^3\cup (s_2+1)A_{s_1}^3]$ is a cycle of order $4(2p+1)-4=8p$, and thus $\hat{S_p^{3}}[s_1A_{s_2}^3\cup (s_1+1)A_{s_2}^3\cup s_2A_{s_1}^3\cup (s_2+1)A_{s_1}^3]-\{s_1,s_2\}$ is a path of order $8p-1$.

\vspace{1mm} By the different choice for $s_1, s_2\in S$, we obtain total number of $\big(^{\frac{p}{2}}_2\big)$ such paths in $\hat{S_p^3}[B_3]$. This proves (iii).

\vspace{3mm} By (i), (ii), (iii), $\hat{S_p^3}[B_3^*]$ is a linear forest, proving (iv).

Now assume that $n\geq 4$ and the statements are true for smaller value of $n$.
Since $$B_{n-1}^*=(\bigcup_{s\in S}A_s^{n-1})\cup(\bigcup_{3\leq k\leq {n-1}}B_k^{n-1-k})$$ and $B_n^*=(\bigcup_{j\in P}jB_{n-1}^*)\setminus A$, we have
\vspace{-0.7cm}\begin{eqnarray*}
\\B_n^*
&=&(\bigcup_{j\in P}\bigcup_{s\in S}jA_s^{n-1})\cup(\bigcup_{j\in P}\bigcup_{3\leq k\leq {n-1}}jB_k^{n-1-k})\setminus A\\
&=&\bigcup_{s\in S}(sA_s^{n-1}\cup(s+1)A_s^{n-1}\cup(\bigcup_{j\in P\setminus\{s,s+1\}}jA_s^{n-1}))\cup(\bigcup_{3\leq k\leq {n-1}}B_k^{n-k})\setminus A\\
&=&\bigcup_{s\in S}(sA_s^{n-1}\cup(s+1)A_s^{n-1})\cup(\bigcup_{s\in S}\bigcup_{j\in P\setminus\{s,s+1\}}jA_s^{n-1})\cup(\bigcup_{3\leq k\leq {n-1}}B_k^{n-k})\setminus A\\
&=&\bigcup_{s\in S}A_s^n\cup (\bigcup_{3\leq k\leq {n-1}}B_k^{n-k})\cup B_n'\setminus A\\
&=&\bigcup_{s\in S}A_s^n\cup (\bigcup_{3\leq k\leq {n-1}}B_k^{n-k})\cup B_n\\
&=&\bigcup_{s\in S}A_s^n\cup (\bigcup_{3\leq k\leq {n}}B_k^{n-k}).\\
\end{eqnarray*}

This proves (i).

\vspace{3mm}
Now we prove (ii). By induction hypothesis (ii), $\hat{S}_p^{n-1}[A_s^{n-1}]$ is path connecting $\hat{s}$ and $\hat{s+1}$ of order $2^{n-2}p+1$.
Since $\hat{S_p^{n}}[sA_s^{n-1}]\cong\hat{S_p^{n}}[(s+1)A_s^{n-1}]\cong\hat{S_p^{n-1}}[A_s^{n-1}]$, $\hat{S_p^{n}}[A_s^n]$ is path connecting $\hat{s}$ and $\hat{s+1}$ of order $2(2^{n-2}p+1)-1=2^{n-1}p+1$ ( by the similar reason to that of case when $n=3$). This proves (ii).

\vspace{3mm} To prove (iii), let $3\leq k\leq {n-1}$. Since
$$B_k^{n-k}=\{jb|\ j\in P^{n-k},b\in B_k\}=\bigcup_{j\in p}jB_k^{n-1-k},$$ we have $$\hat{S_p^{n}}[B_k^{n-k}]=\hat{S_p^{n}}[\bigcup_{j\in p}jB_k^{n-1-k}].$$ For any $j\in P$, $\hat{S_p^{n}}[jB_k^{n-1-k}]\cong\hat{S_p^{n-1}}[B_k^{n-1-k}]$. By induction hypothesis (iii), there are $p^{n-1-k}\big(^{\frac{p}{2}}_2\big)$ paths of order $2^kp-1$ in $\hat{S_p^{n}}[jB_k^{n-1-k}]$. Moreover, since $E_{\hat{S_p^{n}}}[iB_k^{n-1-k},jB_k^{n-1-k}]=\emptyset$ for any $i, j\in P$ with $i\neq j$, there exists $pp^{n-1-k}\big(^{\frac{p}{2}}_2\big)$ paths of order $2^kp-1$ in $\hat{S_p^{n}}[B_k^{n-k}]$.

If $k=n$, $B_n^{n-n}=B_n=B_n'\setminus A$. Since $\hat{S_p^{n}}[s_1A_{s_2}^{n-1}\cup (s_1+1)A_{s_2}^{n-1}\cup s_2A_{s_1}^{n-1}\cup (s_2+1)A_{s_1}^{n-1}]$ is a cycle of order $4(2^{n-2}p+1)-4$ and $\hat{S_p^{n}}[s_1A_{s_2}^{n-1}\cup (s_1+1)A_{s_2}^{n-1}\cup s_2A_{s_1}^{n-1}\cup (s_2+1)A_{s_1}^{n-1}\setminus\{s_1,s_2\}]$ is a path of order $2^{n}p-1$ in $\hat{S_p^{n}}[B_n]$. By the different choice for $s_1, s_2\in S$, we obtain total number of $\big(^{\frac{p}{2}}_2\big)$ such paths in $\hat{S_p^n}[B_n]$. This proves (iii).

\vspace{3mm} By (i) (ii) (iii), we conclude that $\hat{S_p^n}[B_n^*]$ is a linear forest, proving (iv).

\end{proof}
\vspace{3mm} \noindent{\bf Claim 2.} For any $n\geq 3$,
\begin{equation}|B_n^*|=p\times |B_{n-1}^*|-\frac {p(p-1)} 2-\big(^{\frac{p}{2}}_2\big), \end{equation}
\begin{proof}
Since $B_n^*=\bigcup_{j\in P}jB_{n-1}^*\setminus A$ and $s_1\hat{s_2}=s_2\hat{s_1}=\{s_1, s_2\}$, we have $$|\bigcup_{j\in P}jB_{n-1}^*|=p\times |B_{n-1}^*|-\frac {p(p-1)} 2.$$ Moreover, since $|A|=\big(^{\frac{p}{2}}_2\big)$, $|B_n^*|=p\times |B_{n-1}^*|-\frac {p(p-1)} 2-\big(^{\frac{p}{2}}_2\big)$.
\end{proof}

 By Claim 1 and Claim 2, if $n\geq 3$, $\hat{S}_p^n$ has an induced forest of order $$p^n-\frac{p^{n-1}} 8 +\frac{p^{n-2}+\cdots+p} 8+\frac{5p} 8.$$

\noindent{\bf Case 2.} $p$ is odd.

\vspace{1mm} Let $S=\{0,2,\cdots,p-3\}$ and $A=\{\{s_1,s_2\}|\ s_1, s_2\in S, s_1\neq s_2 \}$.
For any $s\in S$, let $$A_s^2=\{\hat{s},\hat{s+1},\{s, s+1\}, s\{i,i+1\},(s+1)\{i,i+1\}|\ i\in P\setminus \{s\}\}\subseteq V_2.$$ It is easy to see that $\hat{S}_p^{2}[A_s^2]$ is path of length $2p$ connecting $\hat{s}$ and $\hat{s+1}$. For any $k\geq 3$, we define some subsets of $V_k$ as: $A_s^k=sA_s^{k-1}\cup (s+1)A_s^{k-1}$, and further

$$B_k'=\bigcup_{s_1,s_2\in S, s_1\neq s_2}(s_1A_{s_2}^{k-1}\cup (s_1+1)A_{s_2}^{k-1}\cup s_2A_{s_1}^{k-1}\cup (s_2+1)A_{s_1}^{k-1}),$$
$$B_k=\bigcup_{s_1,s_2\in S, s_1\neq s_2}(s_1A_{s_2}^{k-1}\cup (s_1+1)A_{s_2}^{k-1}\cup s_2A_{s_1}^{k-1}\cup (s_2+1)A_{s_1}^{k-1})\setminus \{s_1, s_2\}),$$
$$ B_k^{0}=B_k, B_k^{n-k}=\{\underline{j}b|\ \underline{j}\in P^{n-k},b\in B_k\}.$$

With the slight difference to the case when $p$ is parity, we define some more subsets of $V_k$ as follows.
$$T'=\{\hat{p-1},(p-1)\{i,i+1\}|\ i\in P-1\},$$
$$T_k=\bigcup_{s\in S }((p-1)A_s^{k-1}\cup s(p-1)^{k-3}T'\cup (s+1)(p-1)^{k-3}T'),$$

$$ T_k^{0}=T_k, T_k^{n-k}=\{\underline{j}t|\ \underline{j}\in P^{n-k},t\in T_k\}.$$
By the definition above, one can see that $B_k'=B_k\setminus A$, $\bigcup_{j\in P}jB_k^{n-1-k}=B_k^{n-k}$,\\ $\bigcup_{j\in P}jT_k^{n-1-k}=T_k^{n-k},$ $A\cap (\bigcup_{s\in S}A_s^n)=\emptyset$, $A\cap B_k^{n-k}=\emptyset$, $A\cap T_k^{n-k}=\emptyset$, $A\cap (p-1)^{n-2}T'=\emptyset$.

It is easy to see that $\hat{S}_p^{2}[T']$ is path of order $p$ connecting $(p-1)\{0,1\}$ and $\hat{p-1}$. Let $B_2^*=(\cup_{s\in S}A_s^2)\cup T'$. Trivially, $\hat{S_p^2}[ B_2^*]$ is a linear forest (see Fig. 8 for $\hat{S_5^2}[ B_2^*]$, as depicted in red color).

\begin{figure}[!h]
\begin{center}
\scalebox {0.5}[0.5]{\includegraphics {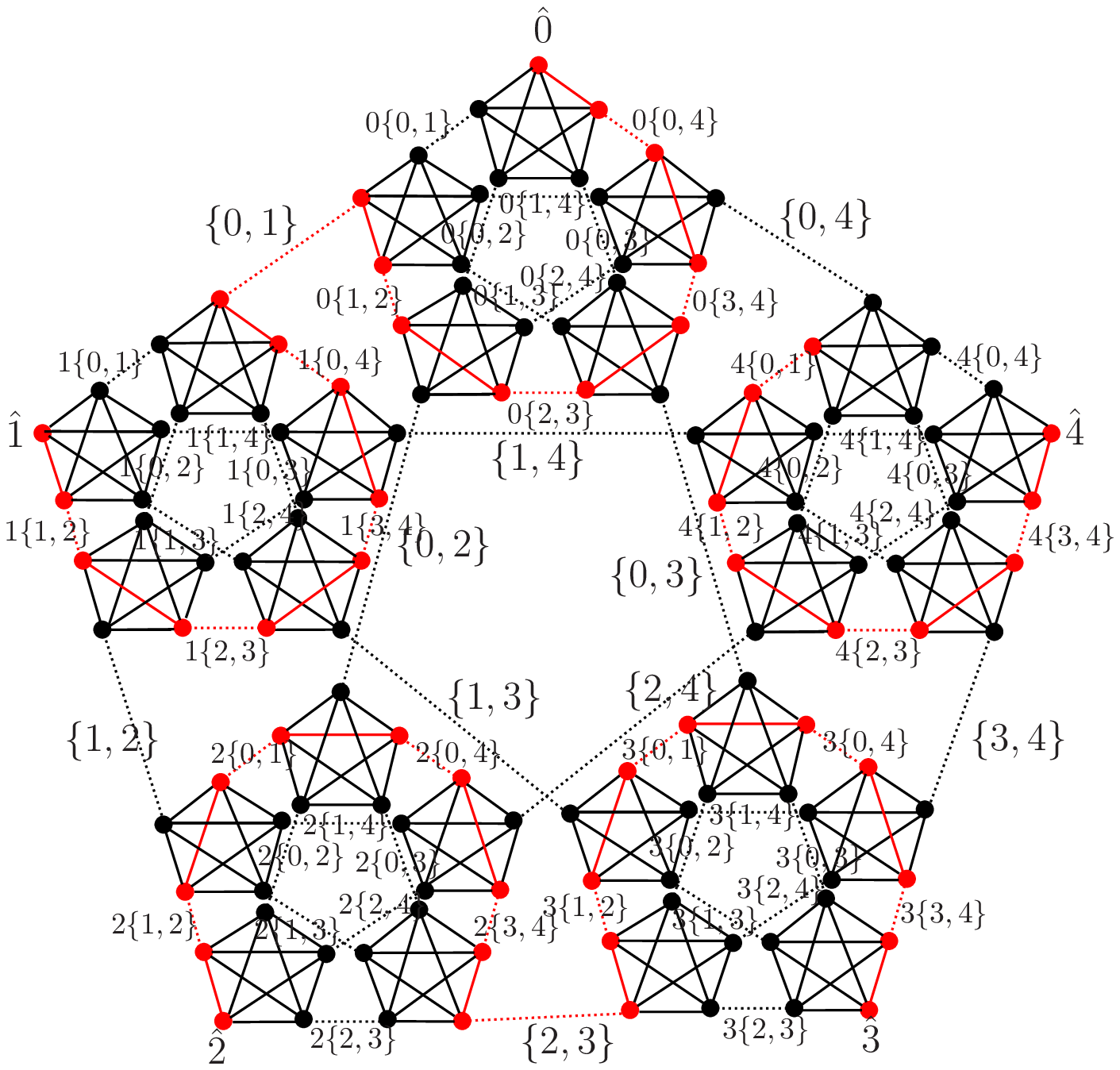}}
\centerline{Fig. 8\quad $\hat{S_5^2}$, where a dotted edge of $S_5^3$ represents a vertex of $\hat{S_5^2}$ result from contracting it}
\centerline{  $\hat{S_5^2}[B_2^*]$ is the subgraph with colored in red }
\end{center}
\end{figure}

\noindent{\bf Claim 3.}  For any $k\geq 3$, let $B_k^*=\bigcup_{j\in P}jB_{k-1}^*\setminus A$. Then

\vspace{2mm} (i) $B_n^*=(\bigcup_{s\in S}A_s^n)\cup (\bigcup_{3\leq k\leq {n}}B_k^{n-k})\cup (\bigcup_{3\leq k\leq {n}}T_k^{n-k})\cup (p-1)^{n-2}T'$,

(ii) for each $s\in S$, $\hat{S_p^n}[A_{s}^{n}]$ is a path of order $2^{n-1}p+1$ joining $\hat{s}$ and $\hat{s+1}$,

(iii) for any $3\leq k\leq n$, $\hat{S_p^n}[B_k^{n-k}]$ is a linear forest consisting of $p^{n-k}\big(^{\frac{p-1}{2}}_{\ 2}\big)$ paths of order $2^kp-1$,

(iv) for any $3\leq k\leq n$, $\hat{S_p^n}[T_k^{n-k}]$ is a linear forest consisting of $p^{n-k}\frac{p-1} 2$ paths of order $2^{k-2}p+2p-1$,

(v) $\hat{S_p^n}[(p-1)^{n-2}T']$ is a path of order $p$,

(vi) $\hat{S_p^n}[B_n^*]$ is a linear forest.

\begin{proof} By induction on $n$. First we prove the statements hold for $n=3$.
Since $B_2^*=(\bigcup_{s\in S}A_s^2)\cup T'$, we have
\vspace{-0.7cm}\begin{eqnarray*}
\\B_3^*
&=&(\bigcup_{j\in P}jB_{2}^*)\setminus A\\
&=&(\bigcup_{j\in P}j(\bigcup_{s\in S}A_s^2\cup T'))\setminus A\\
&=&((\bigcup_{j\in P\setminus\{p-1\}}j\bigcup_{s\in S}A_s^2)\setminus A)\cup(\bigcup_{j\in P}jT')\cup(\bigcup_{s\in S}(p-1)A_s^2)\\
&=&(\bigcup_{s\in S}A_s^3)\cup B_3\cup T_3\cup (p-1)T'.\\
\end{eqnarray*}
\vspace{3mm} The proof of (ii) and (iii) is completely similar to that of (ii) and (iii) for the case when $p$ is even. Next we prove (iv).
By the definition, $$T_3=\bigcup_{s\in S }((p-1)A_s^2\cup sT'\cup (s+1)T').$$ It can be seen that for a fixed $s\in S$,

$\hat{S_p^{3}}[(p-1)A_s^2]$ is a path of order $2p+1$ joining $(p-1)\hat{s}$ and $(p-1)\hat{(s+1)}$,

$\hat{S_p^{3}}[sT']$ is a path of order $p$ joining $s(p-1)\{0,1\}$ and $s\hat{(p-1)}$,

$\hat{S_p^{3}}[(s+1)T']$ is a path of order $p$ joining $(s+1)(p-1)\{0,1\}$ and $(s+1)\hat{(p-1)}$.

\noindent Moreover, since $(p-1)\hat{s}=s\hat{(p-1)}=\{s, p-1\}$, $(s+1)\hat{(p-1)}=(p-1)\hat{(s+1)}=\{s+1, p-1\}$,
$\hat{S_p^{3}}[(p-1)A_s^2\cup sT'\cup (s+1)T']$ is a path of order $2p+1+2p-2=4p-1$. So,
By the different choice for $s\in S$, we obtain total number of $\frac{p-1} 2$ such paths in $\hat{S_p^3}[T_3]$.

\vspace{3mm} Now we show (v). Since $\hat{S}_p^{2}[T']$ is path of order $p$ connecting $(p-1)\{0,1\}$ and $\hat{p-1}$ and $(p-1)\hat{p-1}=\hat{p-1}$, $\hat{S}_p^{3}[(p-1)^{3-2}T']$ is path of order $p$ connecting $(p-1)^{2}\{0,1\}$ and $\hat{p-1}$.

\vspace{3mm} By (i)-(v), we conclude that $\hat{S_p^3}[B_3^*]$ is a linear forest.

\vspace{3mm} Next assume that $n\geq 4$ and the statements are true for smaller value of $n$.
It is clear that $$\bigcup_{j\in P}\bigcup_{3\leq k\leq {n-1}}jB_k^{n-1-k}=\bigcup_{3\leq k\leq {n-1}}B_k^{n-k},$$ $$\bigcup_{j\in P}\bigcup_{3\leq k\leq {n-1}}jT_k^{n-1-k}=\bigcup_{3\leq k\leq {n-1}}T_k^{n-k}.$$
Since $p$ is odd, $p-1$ is even, we have $$(\bigcup_{j\in P\setminus\{p-1\}}\bigcup_{s\in S}j A_s^{n-2})\setminus A=(\bigcup_{s\in S}A_s^n)\cup B_n.$$
Since $$B_{n-1}^*=(\bigcup_{s\in S}A_s^{n-1})\cup (\bigcup_{3\leq k\leq {n-1}}B_k^{n-1-k})\cup (\bigcup_{3\leq k\leq {n-1}}T_k^{n-1-k})\cup (p-1)^{n-3}T'$$ and $B_n^*=\bigcup_{j\in P}jB_{n-1}^*\setminus A$, we have
\vspace{-0.7cm}\begin{eqnarray*}
\\B_n^*
&=&\bigcup_{j\in P}\bigcup_{s\in S}jA_s^{n-1}\cup\bigcup_{j\in P}\bigcup_{3\leq k\leq {n-1}}jB_k^{n-1-k}\cup\bigcup_{j\in P}\bigcup_{3\leq k\leq {n-1}}jT_k^{n-1-k}\cup(\bigcup_{j\in P}j(p-1)^{n-3}T')\setminus A\\
&=&(\bigcup_{j\in P\setminus\{p-1\}}\bigcup_{s\in S}jA_s^{n-1})\setminus A\cup\bigcup_{3\leq k\leq {n-1}}B_k^{n-k}\cup\bigcup_{3\leq k\leq {n-1}}T_k^{n-k}\cup\bigcup_{s\in S}(p-1)A_s^{n-1}\\
&&\cup\bigcup_{j\in P}j(p-1)^{n-3}T'\\
&=&\bigcup_{s\in S}A_s^n\cup B_n\cup\bigcup_{3\leq k\leq {n-1}}B_k^{n-k}\cup\bigcup_{3\leq k\leq {n-1}}T_k^{n-k}\cup\bigcup_{s\in S}(p-1)A_s^{n-1}\cup\bigcup_{j\in P\setminus\{p-1\}}j(p-1)^{n-3}T'\\
&&\cup(p-1)^{n-2}T'\\
&=&\bigcup_{s\in S}A_s^n\cup\bigcup_{3\leq k\leq n}B_k^{n-k}\cup\bigcup_{3\leq k\leq {n-1}}T_k^{n-k}\cup T_k\cup(p-1)^{n-2}T'\\
&=&\bigcup_{s\in S}A_s^n\cup\bigcup_{3\leq k\leq n}B_k^{n-k}\cup\bigcup_{3\leq k\leq n}T_k^{n-k}\cup(p-1)^{n-2}T'.\\
\end{eqnarray*}
proving (i).

\vspace{3mm} The proof of (ii) and (iii) is completely similar to that of (ii) and (iii) for the case when $p$ is even.

\vspace{3mm} To prove (iv), we first consider the case when $3\leq k\leq {n-1}$. Since
$$T_k^{n-k}=\{\underline{j}b|\ \underline{j}\in P^{n-k},b\in T_k\}=\bigcup_{j\in p}jT_k^{n-1-k},$$ we have $$\hat{S_p^{n}}[T_k^{n-k}]=\hat{S_p^{n}}[\bigcup_{j\in p}jT_k^{n-1-k}].$$ For any $j\in P$, $\hat{S_p^{n}}[jT_k^{n-1-k}]\cong\hat{S_p^{n-1}}[T_k^{n-1-k}]$. By induction hypothesis (iv), there are $p^{n-1-k}\frac{p-1} 2$ paths of order $2^{k-2}p+2p-1$ in $\hat{S_p^{n-1}}[jT_k^{n-1-k}]$. Moreover,\\ since $E_{\hat{S_p^{n}}}[iT_k^{n-1-k},jT_k^{n-1-k}]=\emptyset$ for any $i, j\in P$ with $i\neq j$,  there exists $p^{n-k}\frac{p-1} 2$ paths of order $2^{k-2}p+2p-1$ in $\hat{S_p^{n}}[T_k^{n-k}]$.

\vspace{3mm} If $k=n$, $T_n^{n-n}=T_n=\bigcup_{s\in S }((p-1)A_s^{n-1}\cup s(p-1)^{n-3}T'\cup (s+1)(p-1)^{n-3}T')$. Note that for a fixed $s\in S$,

$\hat{S_p^{n}}[(p-1)A_s^{n-1}]$ is a path of order $2^{n-2}p+1$ joining $(p-1)\hat{s}$ and $(p-1)\hat{(s+1)}$,

$\hat{S_p^{n}}[s(p-1)^{n-3}T']$ is a path of order $p$ joining $s(p-1)^{n-2}\{0,1\}$ and $s(p-1)^{n-3}\hat{(p-1)}=s\hat{(p-1)}$,

$\hat{S_p^{n}}[(s+1)(p-1)^{n-3}T']$ is a path of order $p$ joining $(s+1)(p-1)^{n-2}\{0,1\}$ and $(s+1)(p-1)^{n-3}\hat{(p-1)}=(s+1)\hat{(p-1)}$.

Furthermore, since $(p-1)\hat{s}=s\hat{(p-1)}=\{s, p-1\}$, $(s+1)\hat{(p-1)}=(p-1)\hat{(s+1)}=\{s+1, p-1\}$,
$\hat{S_p^{n}}[(p-1)A_s^{n-1}\cup s(p-1)^{n-3}T'\cup (s+1)(p-1)^{n-3}T']$ is a path of order $2^{n-2}p+2p-1$. By the different choice for $s\in S$, we obtain total number of $\frac{p-1} 2$ such paths in $\hat{S_p^n}[T_n]$. This prove (iv).

\vspace{3mm} Since $\hat{S}_p^{2}[T']$ is path of order $p$ connecting $(p-1)\{0,1\}$ and $\hat{p-1}$. Moreover, since  $(p-1)^{n-2}\hat{p-1}=\hat{p-1}$, $\hat{S}_p^{n}[(p-1)^{n-2}T']$ is path of order $p$ connecting $(p-1)^{n-1}\{0,1\}$ and $\hat{p-1}$, proving (v).

\vspace{3mm} By (i)-(v), $\hat{S_p^n}[B_n^*]$ is a linear forest. This proves (vi).

\end{proof}
\vspace{3mm} \noindent{\bf Claim 4.} For any $n\geq 3$,
\begin{equation}|B_n^*|=p\times |B_{n-1}^*|-\frac {p(p-1)} 2-\big(^{\frac{p-1}{2}}_{\  2}\big), \end{equation}
\begin{proof}
Since $B_n^*=\bigcup_{j\in P}jB_{n-1}^*\setminus A$ and $s_1\hat{s_2}=s_2\hat{s_1}=\{s_1, s_2\}$, we have $$|\bigcup_{j\in P}jB_{n-1}^*|=p\times |B_{n-1}^*|-\frac {p(p-1)} 2.$$ Since $|A|=\big(^{\frac{p-1}{2}}_{\  2}\big)$, $|B_n^*|=p\times |B_{n-1}^*|-\frac {p(p-1)} 2-\big(^{\frac{p-1}{2}}_{\  2}\big)$.
\end{proof}

By Claim 3 and Claim 4, if $n\geq 3$, $\hat{S}_p^n$ has an induced forest of order $$p^n-\frac{p^{n-1}+p^{n-2}-5p+3} 8. $$
\end{proof}

Since $\tau(G)+f(G)=|V(G)|$ for a graph $G$, Theorem 4.1 provides an upper bound for $\tau(\hat{S}_p^n)$. We suspect the upper bound is the exact value of feedback vertex number of the generalized Sierpi\'{n}ski triangle graph $\hat{S}_p^n$ for the case when $p\geq 4$.

\begin{conjecture}
For two integers $n\geq 3$ and $p\geq 4$,
$$f(\hat{S_p^n})=\left \{
\begin{array}{ll}
p^n-\frac{p^{n-1}} 8 +\frac{p^{n-2}+\cdots+p} 8+\frac{5p} 8, & \mbox{if $p$ is even } \\
p^n-\frac{p^{n-1}+p^{n-2}-5p+3} 8, & \mbox{if $p$ is odd}.
\end{array}
\right.
$$

\end{conjecture}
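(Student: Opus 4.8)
The plan is to prove the matching upper bound, that $f(\hat{S_p^n})$ is at most the stated value, since Theorem 4.1 already supplies the reverse inequality $f(\hat{S_p^n})\ge$ this value via the explicit induced forest $B_n^*$; together these give the claimed equality. Passing to the complementary parameter $\tau(\hat{S_p^n})=|V(\hat{S_p^n})|-f(\hat{S_p^n})$ and using Lemma 3.1, it suffices to prove by induction on $n$ the recursion $\tau(\hat{S_p^n})=p\,\tau(\hat{S_p^{n-1}})+\binom{p/2}{2}$ for even $p$ (and the analogue with $\binom{(p-1)/2}{2}$ for odd $p$), exactly as Section 3 proves Theorem 3.2(iii). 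The construction $B_n^*=\big(\bigcup_{j\in P} jB_{n-1}^*\big)\setminus A$ of Theorem 4.1 is precisely a one-step expansion, which by Claim 2 (resp. Claim 4) delivers the upper half $\tau(\hat{S_p^n})\le p\,\tau(\hat{S_p^{n-1}})+\binom{p/2}{2}$; so the entire burden is the lower half $\tau(\hat{S_p^n})\ge p\,\tau(\hat{S_p^{n-1}})+\binom{p/2}{2}$. I would anchor the induction at $n=2$, using the stated values of $f(\hat{S_p^2})$.

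For the lower bound, follow the decomposition of Section 3: write $V_n^i=i\hat{S_p^{n-1}}\cong\hat{S_p^{n-1}}$, glued so that copies $i$ and $j$ share exactly the contracted corner $\{i,j\}$, while the global corner $\hat{i}$ lies in copy $i$ alone. Let $Q=\{\{i,j\}:i\ne j\}$ be the set of $\binom{p}{2}$ shared corners, let $A_n$ be a minimum feedback vertex set, and put $A_n^i=A_n\cap V_n^i$. Since each $A_n^i$ is a feedback vertex set of its copy and the only multiply-counted vertices are the shared corners, $|A_n|=\sum_{i}|A_n^i|-|A_n\cap Q|$. The target becomes $\sum_i|A_n^i|\ge p\,\tau(\hat{S_p^{n-1}})+|A_n\cap Q|+\binom{p/2}{2}$; that is, the copies must collectively overshoot the trivial bound $p\,\tau(\hat{S_p^{n-1}})$ by at least $|A_n\cap Q|+\binom{p/2}{2}$, the saving from shared corners being exactly cancelled and an extra $\binom{p/2}{2}$ forced out.

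The overshoot has two independent sources, both of which I would quantify. First, since each $A_n^i$ already makes its copy acyclic, every cycle of $\hat{S_p^n}-A_n$ is an inter-copy cycle threading through surviving shared corners; these are governed by the bipartite incidence graph $B$ whose parts are the residual components of the copies that meet $Q$ and the surviving shared corners, with an edge recording membership. The residual graph is a forest precisely when $B$ is acyclic, and since each of the $\binom{p}{2}-|A_n\cap Q|$ surviving shared corners lies in exactly two copies and so contributes two edges to $B$, acyclicity forces the copies to be heavily fragmented, which costs extra deletions beyond a minimum set. Second, I would establish a corner-penalty lemma for $\hat{S_p^{n-1}}$ generalizing Theorem 3.2(i): a minimum feedback vertex set can absorb only a limited number of the $p$ corners without paying in size, and leaving many surviving corners inside a single residual component likewise forces additional deletions. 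Balancing the fragmentation cost from $B$ against the per-copy corner penalties, net of the $|A_n\cap Q|$ saving, is what should yield the exact surplus $\binom{p/2}{2}$ (resp. $\binom{(p-1)/2}{2}$).

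The hard part will be calibrating this tradeoff tightly. For $p=3$ the whole interaction collapses to the single clean statement that a minimum set contains at most one global corner (Theorem 3.2(i)); for $p\ge 4$ the correct invariant is a finer \emph{corner profile} recording both which corners a copy deletes and how its surviving corners are partitioned into residual components, and one must show that every profile compatible with global acyclicity forces total cost at least $p\,\tau(\hat{S_p^{n-1}})+\binom{p/2}{2}$. I expect the extremal profiles to be exactly those realized by the forest $B_n^*$ of Theorem 4.1 — namely the $\binom{p/2}{2}$ shared corners of the set $A$ that are deliberately kept out of the forest, and, for odd $p$, the asymmetric role of the vertex $p-1$ encoded by $T'$ — so the construction should pinpoint the tight configuration and guide the matching lower bound. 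Once the recursion is established for both parities, unwinding it from the base case $n=2$ produces a geometric sum that collapses to the stated closed form, completing the proof of the equality.
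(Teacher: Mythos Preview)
The statement you are trying to prove is labelled \emph{Conjecture} in the paper, and the paper gives no proof of it. Immediately before stating it the authors write that they ``suspect the upper bound is the exact value'' of $\tau(\hat S_p^n)$ for $p\ge 4$; Theorem~4.1 supplies only the inequality $f(\hat S_p^n)\ge\cdots$ via the explicit forest $B_n^*$, and the matching direction is left open. So there is no paper proof to compare your proposal against.

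As for the proposal itself: the overall architecture is sound --- the decomposition $V_n=\bigcup_i V_n^i$ with shared corners $Q$, the identity $|A_n|=\sum_i|A_n^i|-|A_n\cap Q|$, and the translation of Claim~2/Claim~4 into the recursion $\tau(\hat S_p^n)\le p\,\tau(\hat S_p^{n-1}})+\binom{\lfloor p/2\rfloor}{2}$ are all correct. But the substance of the conjecture is precisely the reverse inequality, and there your argument is still only a plan. You assert the existence of a ``corner-penalty lemma'' generalising Theorem~3.2(i) and a tight tradeoff between fragmentation cost in the incidence graph $B$ and per-copy penalties, yet you neither state the lemma precisely nor indicate why the balance comes out to exactly $\binom{p/2}{2}$ (respectively $\binom{(p-1)/2}{2}$). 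For $p=3$ the entire argument collapses to the single clean fact that a minimum feedback vertex set contains at most one extreme vertex; for $p\ge4$ the space of ``corner profiles'' is genuinely richer, and you yourself flag the calibration as ``the hard part''. Until that lemma is formulated and proved, the lower bound --- and hence the conjecture --- remains open, exactly as the paper leaves it.
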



%
%
%
%


\end{document}